
\documentclass[reqno,11pt]{amsart}
\usepackage{amsmath, latexsym, amsfonts, amssymb, amsthm, amscd,mathrsfs}
\usepackage{graphics,epsf,psfrag}
\setlength{\oddsidemargin}{5mm}
\setlength{\evensidemargin}{5mm}
\setlength{\textwidth}{150mm}
\setlength{\headheight}{0mm}
\setlength{\headsep}{12mm}
\setlength{\topmargin}{0mm}
\setlength{\textheight}{220mm}
\setcounter{secnumdepth}{2}

\numberwithin{equation}{section}

\newtheorem{maintheorem}{Theorem}
\newtheorem{theorem}{Theorem}[section]
\newtheorem{fact}[theorem]{Fact}
\newtheorem{con}[theorem]{Conjecture}
\newtheorem{lemma}[theorem]{Lemma}
\newtheorem{proposition}[theorem]{Proposition}

\newtheorem{remark}[theorem]{Remark}
\newtheorem{definition}[theorem]{Definition}

\newtheorem{question}[theorem]{Question}

\newcommand{\R}{\mathbb{R}}
\newcommand{\Z}{\mathbb{Z}}

\renewcommand{\tilde}{\widetilde}

\newcommand{\cT}{{\ensuremath{\mathcal T}} }


\DeclareMathSymbol{\leqslant}{\mathalpha}{AMSa}{"36} 
\DeclareMathSymbol{\geqslant}{\mathalpha}{AMSa}{"3E} 
\DeclareMathSymbol{\eset}{\mathalpha}{AMSb}{"3F}     
 








\newcommand{\gl}{\lambda}

\renewcommand{\epsilon}{\varepsilon}

\makeatletter
\def\captionfont@{\footnotesize}
\def\captionheadfont@{\scshape}

\long\def\@makecaption#1#2{%
  \vspace{2mm}
  \setbox\@tempboxa\vbox{\color@setgroup
    \advance\hsize-6pc\noindent
    \captionfont@\captionheadfont@#1\@xp\@ifnotempty\@xp
        {\@cdr#2\@nil}{.\captionfont@\upshape\enspace#2}%
    \unskip\kern-6pc\par
    \global\setbox\@ne\lastbox\color@endgroup}%
  \ifhbox\@ne 
    \setbox\@ne\hbox{\unhbox\@ne\unskip\unskip\unpenalty\unkern}%
  \fi
  \ifdim\wd\@tempboxa=\z@ 
    \setbox\@ne\hbox to\columnwidth{\hss\kern-6pc\box\@ne\hss}%
  \else 
    \setbox\@ne\vbox{\unvbox\@tempboxa\parskip\z@skip
        \noindent\unhbox\@ne\advance\hsize-6pc\par}%
\fi
  \ifnum\@tempcnta<64 
    \addvspace\abovecaptionskip
    \moveright 3pc\box\@ne
  \else 
    \moveright 3pc\box\@ne
    \nobreak
    \vskip\belowcaptionskip
  \fi
\relax
}
\makeatother
\def\writefig#1 #2 #3 {\rlap{\kern #1 truecm
\raise #2 truecm \hbox{#3}}}


\renewcommand{\Pr}{ \mathrm P}

\newcommand{ \TV}{ \mathrm{TV} }

\newcommand{ \cL}{ \mathcal L }


\usepackage[normalem]{ulem}

\begin{document}

\title{On sensitivity of uniform mixing times}
\author{Jonathan Hermon}
\thanks{
University of Cambridge, Cambridge, UK. E-mail: {\tt jonathan.hermon@statslab.cam.ac.uk}. Financial support by
the EPSRC grant EP/L018896/1.}

\date{}

\begin{abstract}
We show that  the order of the $L_{\infty}$-mixing time of simple random walks on a sequence of uniformly bounded degree graphs of size $n$ may increase by an optimal factor of $\Theta( \log \log n)$ as a result of a bounded perturbation of the edge weights. This answers a question and a conjecture  of Kozma.
\end{abstract}

\maketitle

\paragraph*{\bf Keywords:}
{\small Sensitivity; mixing-time; sensitivity of mixing times; hitting times.

\paragraph*{\bf MSC class:}
{\small 60J10.

\section{Introduction}

An important question is whether mixing times are robust under small changes to the geometry of the Markov chain. For instance, can bounded perturbations of the edge weights change the mixing time by more than a constant factor? Similarly,  how far apart can the mixing times of lazy simple random walks on two roughly-isometric graphs of bounded degree be? A related question is whether mixing times can be characterized up to universal constants (perhaps only under reversibility) using geometric quantities or extremal characterizations which are robust.\footnote{That is, using quantities which can change by at most some bounded factor under a bounded perturbation of the edge weights, or for lazy simple random walk on a bounded degree graph, under  rough-isometries.} Different variants of this question were asked by various authors such as Pittet and Saloff-Coste \cite{cf:Pittet}, Kozma \cite[p.\ 4]{cf:Kozma}, Diaconis and Saloff-Coste \cite[p.\ 720]{diaconis} and Aldous and Fill \cite[Open Problem 8.23]{cf:Aldous} (the last two references ask for an extremal characterization of the $L_{\infty}$-mixing time in terms of the Dirichlet form). 

Denote the $L_p$ mixing time of lazy simple random walk on a finite connected simple graph $G$ by $\tau_p(G)$ (see \eqref{eq: tau_p}). Kozma \cite{cf:Kozma} made the following conjecture:
\begin{con}[\cite{cf:Kozma}]
\label{con: Kozma}
Let $G$ and $H$ be two finite $K$-roughly isometric graphs (see Definition \ref{def: RI}) of maximal degree $\le d$. Then for some $C(K,d)$ depending only on $(K,d)$,
$$\tau_{\infty}(G) \le C(K,d) \tau_{\infty}(H).$$ 
\end{con}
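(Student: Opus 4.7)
The natural plan is to combine a Dirichlet-form comparison with a robust characterization of $\tau_{\infty}$. Given a $K$-rough isometry $\phi\colon G\to H$, for each edge $\{x,y\}$ of $G$ I would fix a path in $H$ from $\phi(x)$ to $\phi(y)$ of length at most a constant depending on $K$, and conversely for edges of $H$. The canonical-paths argument of Diaconis and Saloff-Coste then yields a two-sided comparison of the Dirichlet forms $\cE_G$ and $\cE_H$ with constants depending only on $(K,d)$. Since degrees are uniformly bounded and $\phi$ is $K$-dense, the stationary masses also satisfy $\pi_G(x)\asymp \pi_H(\phi(x))$ up to constants depending on $(K,d)$, and in particular $\rel(G)\asymp \rel(H)$.

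I would then try to upgrade this spectral comparison to an $L_{\infty}$ comparison along one of three standard routes. The first is via the logarithmic Sobolev constant: the Diaconis--Saloff-Coste bound $\tau_{\infty}\le \alpha_{\mathrm{LS}}^{-1}\left(\log\log(1/\pi_{\min})+O(1)\right)$, combined with a canonical-paths estimate for $\alpha_{\mathrm{LS}}^{-1}$, bounds $\tau_{\infty}$ purely in terms of Dirichlet-form data. The second is via isoperimetric or Nash profiles, which transfer under rough isometry essentially by definition. The third is via a hitting-time characterization of $\tau_{\infty}$ involving $\rel\cdot\log(1/\pi_{\min})$ plus $\max_x \mean{T_A}$ for suitable "central" sets $A$, each ingredient of which is visibly comparable across $G$ and $H$.

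The main obstacle — and the point at which I expect the strategy to fail — is that each of these proxies controls $\tau_{\infty}$ only up to an extra $\log\log n$-type slack, and none of them captures $\tau_{\infty}$ sharply. Indeed $\tau_{\infty}$ is determined by a pointwise worst-case behaviour of the heat kernel and can be dominated by a single bottleneck vertex created by a purely local modification of the edge weights, which a rough isometry is not fine enough to detect, yet to which the Dirichlet form, the spectral gap and the log-Sobolev constant are all insensitive. This is precisely what the paper's main result exhibits: a bounded perturbation of the edge weights (a very restrictive special case of a rough isometry) can force $\tau_{\infty}$ to jump by an order-$\log\log n$ factor. So I would expect the natural Dirichlet-form-based approach to be essentially sharp, producing at best a $C(K,d)\log\log n$ bound, and any proof of Conjecture \ref{con: Kozma} would have to exploit a feature of $\tau_{\infty}$ strictly finer than anything currently known to be robust under rough isometry.
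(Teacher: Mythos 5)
You have not, of course, produced a proof of the statement, and indeed no review of your argument as a ``proof'' is possible: Conjecture \ref{con: Kozma} is precisely what this paper \emph{refutes}. Theorem \ref{thm: 1} exhibits bounded-degree graphs $G_n$ and $G_n'$, with $G_n'$ a mere $2$-stretch of $G_n$ (so the identity map is a $2$-rough isometry), for which $\tau_{1}(G_n') \ge c\,\tau_{\infty}(G_n)\log\log|V_n|$. So the correct ``resolution'' of the statement is a counterexample, not a derivation, and your concluding paragraph — that all known robust proxies (spectral gap, log-Sobolev, spectral profile, evolving sets) control $\tau_{\infty}$ only up to a $\log\log$ factor, and that this slack is real — is exactly the paper's own diagnosis (see Remarks \ref{rem: opt} and \ref{rem: spectral}). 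In that sense your assessment of where the Dirichlet-form strategy breaks down is sound and consistent with the paper.

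What your proposal does not supply, and what constitutes the actual content of the paper, is the construction showing the slack is attained. This is far from automatic: before this work no bounded-degree example was known in which the spectral profile bound $\rho$ overshoots $\tau_{\infty}$ even by a diverging factor, and Remark \ref{rem: spectral} explains that any example achieving the optimal $\log\log|V_n|$ gap must contain $\Theta(\log\log|V_n|)$ disjoint sets on different logarithmic stationary scales, each contributing comparably to $\rho$. The paper builds such a graph by adapting Kozma's weighted complete-graph example to the bounded-degree, unweighted setting (islands $U_i = W_i \times H_i$ with stretched expanders playing the role of the slow coordinate), and then uses the Ding--Peres/Benjamini observation that harmonic measure on trees is sensitive to bounded perturbations: planting biased binary trees whose ``good'' side gives shortcuts to a huge expander $H$, shortcuts which become essentially invisible after stretching the right-child edges by a factor of $2$ (Lemmas \ref{lem:auxcalculations1} and \ref{lem: networkreduction}). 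The genuinely hard part, which no soft argument of the kind you sketch can reach, is the $L_{\infty}$ upper bound on $G_n$: one must show for \emph{every} starting vertex that the probability of not reaching $H$ by time $t \ll \tau_1(G_n')$ is much smaller than the conditional $L_{\ell}$ distance from stationarity (Lemma \ref{lem:startatroots} and \eqref{eq: notinH2}), which is why shortcuts are planted more densely where mixing is slower. So: your negative verdict on the conjecture is right, but the substance of resolving the statement lies entirely in this construction and its two-sided analysis, which your proposal does not attempt.
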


Our main result, Theorem \ref{thm: 1}, asserts that this conjecture is false. We shall only consider the following particularly simple type of rough isometries. Let $G_{1}:=(V_{1},E_{1})$ be some graph. Let $G_{2}=(V_{2},E_{2})$ be a graph obtained from $G_{1}$ by ``\textbf{\emph{stretching}}" some of the edges of $G_{1}$ by a factor of at most $K$ (we say that $G_{2}$ is a $K$-\emph{stretch} of $G_{1}$). That is, for some $E \subset E_{1}$ we replace each edge $\{u,v\} \in E $ by a path of length at most $K$ (whose end-points are still denoted by $u$ and $v$). Note that $V_{1} \subset V_{2}$. The identity map is a $K$-rough isometry of $G_1$ and $G_2$.

\begin{maintheorem}
\label{thm: 1}
There exist two families of uniformly bounded degree simple connected graphs $(G_n=(V_n,E_n))_{n \ge 0}$ and $(G'_n)_{n \ge 0}$ and some $c>0$ such that $|V_n| \to \infty $ and  for each $n$, $G_n'$ is a $2$-stretch of   $G_n$  and
\begin{equation}
\label{eq: 1}
\tau_{1}(G_{n}') \ge c \tau_{\infty}(G_{n})\log \log |V_n|.
\end{equation}
\end{maintheorem}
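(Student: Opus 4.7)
My plan is to construct the two graph families via a multi-scale design. Take a bounded-degree ``core'' $H_n$ (for concreteness, an expander or a long cycle) on $\Theta(|V_n|)$ vertices, and decorate it with small ``trap'' gadgets at $m = \lfloor c\log\log|V_n|\rfloor$ distinct scales. At scale $k \in \{1,\dots,m\}$ I would attach a gadget $F_k$ of a carefully tuned size $s_k$ to $H_n$ via a single connector edge $e_k$. The stretched graph $G_n'$ is then obtained by replacing each $e_k$ by a path of length two; by construction both $G_n$ and $G_n'$ are uniformly bounded-degree and $G_n'$ is a $2$-stretch of $G_n$.

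For the upper bound on $\tau_\infty(G_n)$, I would invoke the standard characterization (for reversible chains) of the $L_\infty$-mixing time in terms of hitting times from stationarity, $\tau_\infty \asymp \max_v \bbE_\pi[T_v]$, where $T_v$ denotes the hitting time of $v$. The contribution of each trap to this maximum is controlled by the mixing time of the core plus $O(s_k)$, and a geometric choice of the sizes $(s_k)$ and of the multiplicities of scale-$k$ gadgets can make the maximum over $k$ equal to a common value $T$ that is independent of $m$.

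The bulk of the work is the lower bound $\tau_1(G_n') \ge c T \log\log|V_n|$. The insertion of a degree-$2$ bottleneck vertex on each connector in $G_n'$ reshapes the time-to-enter distribution of $F_k'$ from the core and slows down escapes from $F_k'$. Starting the walk from a fixed vertex of $H_n$, I would use a distinguishing-set or test-function argument to show that at any time $t = o(T\log\log|V_n|)$ there is a positive-fraction family of scales for which the walk has not yet achieved the right occupation ratio in $F_k'$, giving TV distance from $\pi$ bounded below by a positive constant. The $\log\log|V_n|$ factor would arise from \emph{summing} per-scale hitting and equilibration costs rather than taking their maximum, exploiting that the gadgets are well separated inside $G_n'$.

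The main obstacle is precisely this summation step: the $\log\log|V_n|$ gain over the naive per-scale bound requires showing that the walk cannot simultaneously equilibrate several scales at once. I would try to establish this either through an explicit coupling that isolates the scale-$k$ behaviour, or through a conditional-entropy / mutual-information argument controlling the joint distribution of the indicators $(\ind\{X_t \in F_k'\})_{k=1}^m$. Balancing this lower bound against the upper bound---which forces the gadgets to be ``simultaneously mixable'' in $G_n$---is the central design tension, and the correct engineering of the multiplicities and of the positions of the connectors $e_k$ inside $H_n$ is what must be arranged to yield the claimed sharp $\log\log$ factor.
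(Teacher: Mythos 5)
There are two genuine gaps, and each is fatal on its own. First, your upper bound rests on the claim that for reversible chains $\tau_{\infty}\asymp \max_{v}\bbE_{\pi}[T_{v}]$. No such characterization exists: for a bounded degree expander on $N$ vertices one has $\max_{v}\bbE_{\pi}[T_{v}]=\Theta(N)$ while $\tau_{\infty}=\Theta(\log N)$ (similarly for the hypercube), so this quantity overshoots $\tau_{\infty}$ by an unbounded factor and cannot be used to certify $\tau_{\infty}(G_{n})\le T$. The only hitting-time characterizations available are for $\tau_{1}$ (hitting times of \emph{large} sets) and, for $L_{2}$, in terms of tail probabilities of exit times of sets of all sizes; the actual proof instead bounds $\tau_{\infty}(G_{n})$ directly by an $L_{\ell}$ analysis ($\ell$ close to $1$) showing that from every start the probability of not having reached the dominant expander $H$ by time $t$ beats the conditional $L_{\ell}$ distance, which forces the ``islands'' to have stationary masses at doubly exponentially separated scales (Remark \ref{rem: spectral} explains this is essentially unavoidable).

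Second, and more fundamentally, your sensitivity mechanism cannot work. Attaching each gadget $F_{k}$ by a \emph{single} connector edge and subdividing that edge once changes every relevant quantity (entrance probabilities, escape probabilities, expected crossing times, effective resistances) by at most a universal constant factor, so no accumulation over scales and no $\log\log|V_{n}|$ gain can arise; your own ``summation step'' acknowledges this is the missing idea, but no coupling or entropy argument can manufacture a diverging effect out of constant-factor local perturbations. Moreover, the TV lower bound you sketch is structurally wrong: if the gadgets carry small stationary mass the walk never needs to visit or equilibrate them to mix in total variation. The mechanism the paper uses (following Ding--Peres and Benjamini) is the sensitivity of \emph{harmonic measure} on deep trees: stretching one of the two child-edges at every internal vertex of a binary tree of depth $2^{2n+i-2}$ by a factor $2$ biases each left/right choice by a constant (Lemma \ref{lem: networkreduction}), and the cumulative, exponential-in-depth redirection makes the ``shortcuts'' to $H$ (the good leaves) invisible to the walk on $G_{n}'$; the walk, started at $o_{n}$, then has to traverse all $n\approx\log\log|V_{n}|$ nested islands, each costing $\approx\tau_{\infty}(G_{n})$ because of the $q_{i}$-stretched edges, before it can reach the set carrying almost all of $\pi$. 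Without a mechanism of this kind (many coordinated local changes whose effect compounds) and without a correct route to the $\tau_{\infty}(G_{n})$ upper bound, the proposal does not prove the theorem.
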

\begin{remark}
\label{rem: opt}
The $\log \log |V_n| $ term in \eqref{eq: 1} is optimal. This follows from the fact that the Log-Sobolev constant, and as shown in \cite{cf:Kozma}, also the spectral-profile, provide bounds on $\tau_{\infty}(G)$ for a graph $G=(V,E)$ of maximal degree $d$ which are sharp up to a $C_d \log \log |V| $  factor. These bounds are \emph{robust} (i.e.~invariant, up to a constant factor) under rough isometries in the bounded degree setup (\cite[Lemmas 3.3 and 3.4]{diaconis}). 
\end{remark}
\begin{remark}
\label{rem: spectral}
Denote the upper bound on  $\tau_{\infty}(G)$ given by the spectral profile \cite[Theorem 1.1]{cf:Spectral} by $\rho(G)$ (see \eqref{eq: SPbound}). Kozma \cite{cf:Kozma} asked whether it is possible for a sequence of bounded degree graphs $H_n=(V(H_n),E(H_n))$ with $|V_n| \to \infty $ to satisfy $\rho(H_n)/\tau_{\infty}(H_{n}) \ge c\log \log |V(H_n)|$ for some absolute constant $c>0$. Until now, there was no known example exhibiting this behavior (or even one in which $\rho(H_n)/\tau_{\infty}(H_{n})$ diverges). Since in the bounded degree setup the spectral profile is robust (i.e.~invariant, up to a constant factor) under rough isometries, 
for the graphs from Theorem \ref{thm: 1} it must be the case that for all $n$ \[ \rho(G_n) \ge c_1 \rho(G_n') \ge c_1 \tau_{\infty}(G_{n}')  \ge c_2 \tau_{\infty}(G_{n})\log \log |V_n|. \]

In other words, to prove Theorem \ref{thm: 1} one must first construct a sequence of bounded degree graphs for which the spectral profile bound overshoots the order of the $L_{\infty} $ mixing time by an optimal factor. It follows from the analysis in \cite{cf:Kozma} that such example must have $\Theta( \log \log |V_n|) $ disjoint sets, whose stationary probabilities are of different logarithmic scales, so that each of which makes roughly the same contribution to $\rho(G_n) $.

 We believe that in general, for such example, the aforementioned sets can be chosen so that: (1) the walk can  visit only one (or at most some constant number) of them before it is mixed in $L_{\infty}$, and (2) after stretching some of the edges by a bounded factor, the walk has to pass through at least some fraction of these sets in order to mix in total-variation.    
\end{remark}
We say that a family of graphs is \textbf{\emph{robust}} if for every $C>0$ there exists some $K>0$ such that if we multiply the edge weights of some of the edges by a factor of at most $C$ on these graphs, the corresponding $L_{\infty}$ mixing times are preserved up to a factor of $K$. In \cite{cf:Ding} Ding and Peres studied robustness of $L_1$ mixing (see \S~\ref{s: related}). We note that the $L_{\infty}$ case is much harder (see the discussion in \S~\ref{s: related}).
\begin{remark}
Loosely speaking, stretching an edge by a factor of $K$ has the same effect as decreasing its weight to $1/K$. Indeed, in Theorem \ref{thm: 1} instead of considering a $2$-stretch of $G_n$ we could have considered a bounded perturbation of the edge weights (i.e.~the same example works for both setups). 
\end{remark}

There are numerous works aiming at sharp geometric bounds on the $L_{\infty}$-mixing time, $\tau_{\infty}$, such as Morris and Peres' evolving sets bound \cite{cf:Evolving}, expressed in terms of the expansion profile. The sharpest geometric bounds on $\tau_{\infty}$ are given in terms of the Log-Sobolev constant (see \cite{diaconis} for a survey on the topic) and the spectral profile bound, due to Goel et al.~\cite{cf:Spectral}. Both of which determine $\tau_{\infty}$ up to a multiplicative factor of order $\log \log [\max(e^e, 1/\min \pi (x))] $, where throughout $\pi$ shall denote the stationary distribution. 

\medskip
 
These type of geometric bounds on mixing-times are robust under bounded perturbations (and in the bounded degree setup, also under rough isometries). That is, changing some of the edge weights by at most some multiplicative constant factor can change these geometric bounds only by a constant factor. Theorem \ref{thm: 1} serves as a cautionary
note on the possibility of developing sharp geometric bounds on mixing times.

\medskip

 In contrast with Theorem \ref{thm: 1}, many well-known families of graphs are robust. Robustness of the $L_{\infty}$ and $L_1$ mixing times for
general (weighted) trees under bounded perturbation of the edge weights was established in \cite{cf:L2} (by Peres and the author) and \cite{PS} (by Peres and Sousi), respectively. Robustness of  $L_1$ mixing times for
general trees under rough-isometries was recently established in \cite{berry} by Addario-Berry and Roberts.  Some other examples, which we borrow from \cite{cf:Ding} (where robustness of the $L_1$ mixing time is considered, however apart from example (2) below, the analysis of the $L_{\infty}$ mixing time  is identical) are collected in the next proposition.
\begin{proposition}
\label{prop: robustness}
The following families of graphs are robust: (1) Tori $\{\Z_n^d:n \ge 1 \}$ for every fixed $d$; (2) The giant component of a supercritical Erd\H os-R\' enyei random graph $\mathcal{G}(n,c/n)$ for a fixed $c>1$; (3) Maximal connected component of a critical  Erd\H os-R\' enyei random graph $\mathcal{G}(n,c/n)$;    (4) The hypercube $\{0,1 \}^n$. 
\end{proposition}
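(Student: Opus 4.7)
The plan is, for each of the four families, to sandwich $\tau_\infty$ between matching upper and lower bounds expressed solely in terms of quantities invariant (up to a constant factor) under bounded edge-weight perturbations. Such quantities include the spectral gap, the log-Sobolev constant, Nash constants / volume-growth profile, the Cheeger constant, the spectral profile, and suitable hitting times. This mirrors the framework used by Ding and Peres \cite{cf:Ding} for the $L_1$ mixing time; for families (1), (3), (4) their arguments go through with only cosmetic changes, because in these cases the upper bounds they produce in fact control $\tau_\infty$ as well as $\tau_1$.

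Concretely, for the torus $\Z_n^d$ with $d$ fixed I would use a moderate-growth / Nash inequality in the spirit of Diaconis--Saloff-Coste to obtain $\tau_\infty(\Z_n^d) = O(n^2)$, matched below by $\tau_\infty \ge t_{\mathrm{rel}} = \Omega(n^2)$ from the spectral gap; both inputs are stable under bounded perturbations. For the hypercube $\{0,1\}^n$, hypercontractivity from the robust log-Sobolev constant $\alpha_{\mathrm{LS}} = \Theta(1/n)$ gives $\tau_\infty = O(n \log n)$, and a coupon-collector / hitting-time argument gives the matching robust lower bound. For the critical Erd\H os--R\'enyi component, I would invoke the Nachmias--Peres structure theorem that a.a.s.\ the maximal component is tree-like of diameter $\Theta(n^{1/3})$, on which volume-growth / Nash estimates determine $\tau_\infty$ up to constants, with all inputs robust.

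The genuinely new case, and the main obstacle, is family (2), the giant component $\mathcal{C}_1$ of supercritical $\mathcal{G}(n, c/n)$. Here pendant trees of depth up to $\Theta(\log n)$ hang off the $2$-core, so the $L_\infty$ analysis is not identical to the $L_1$ analysis in \cite{cf:Ding}. My plan is to decompose $\mathcal{C}_1$ into its $2$-core $\mathcal{K}$ and the attached pendant trees. The $2$-core is a.a.s.\ a bounded-degree expander with spectral gap $\Theta(1)$, which yields $\tau_\infty(\mathcal{K}) = O(\log n)$ via standard log-Sobolev / expander estimates, and this is preserved under bounded perturbations. On each pendant tree I would invoke the tree-robustness result of \cite{cf:L2} to conclude that its $L_\infty$ mixing time, and the hitting time from any vertex to its root in $\mathcal{K}$, are stable under bounded perturbations. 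A hitting-time comparison should then bound $\tau_\infty(\mathcal{C}_1)$ above by $\tau_\infty(\mathcal{K})$ plus the worst pendant tree's robust mixing time, giving $O(\log^2 n)$; the matching $\Omega(\log^2 n)$ lower bound comes from a pendant tree of depth $\Theta(\log n)$. The delicate step will be arranging this comparison so that only robust quantities appear in the final bound.
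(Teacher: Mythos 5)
The proposal fails on case (2), and the failure is not a technicality but a wrong target value. You aim to prove $\tau_\infty(\mathcal{C}_1)=\Theta(\log^2 n)$; that is the order of the \emph{total-variation} mixing time, not the $L_\infty$ one, which is $\Theta(\log^3 n)$. Your upper-bound scheme (``$\tau_\infty$ of the $2$-core plus the worst pendant tree's mixing time'') is an $L_1$/hitting-time heuristic and is false in $L_\infty$: starting from the tip of a pendant path of depth $\ell\asymp\log n$, at time $t$ the walk is still inside the path with probability roughly $e^{-ct/\ell^2}$, and conditioned on that event its density with respect to $\pi$ is of order $n/\ell$; hence one needs $t\gtrsim \ell^2\log n\asymp\log^3 n$ before the $L_\infty$ distance drops below $1/2$, so no $O(\log^2 n)$ bound can hold. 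Two further ingredients you rely on are also incorrect: the $2$-core of supercritical $\mathcal{G}(n,c/n)$ is \emph{not} an expander with spectral gap $\Theta(1)$ (it contains induced paths of length $\Theta(\log n)$, so its gap is $\Theta(1/\log^2 n)$; only the kernel obtained by contracting those paths is an expander), and the giant component does not have uniformly bounded degree (max degree $\Theta(\log n/\log\log n)$), so ``bounded-degree expander'' estimates do not apply as stated. Consequently neither your upper bound nor your $\Omega(\log^2 n)$ lower bound produces a sandwich, robust or otherwise.

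For comparison, the paper's proof of (2) is a two-line sandwich at the correct order $\log^3 n$ using only robust quantities: the existence of (pendant) paths of length $\Theta(\log n)$ forces the inverse log-Sobolev constant to be at least $c\log^3 n$ (via Lemma 4.2 of Goel--Montenegro--Tetali), and $\alpha_{\mathrm{LS}}^{-1}\le C\,\tau_\infty$ with $\alpha_{\mathrm{LS}}$ robust, giving the lower bound; for the upper bound, the spectral gap of the giant is $\Theta(1/\log^2 n)$ and, since the average degree is bounded so that $\log(1/\min_x\pi(x))\asymp\log n$, one gets $\tau_\infty\le C\lambda^{-1}\log n=O(\log^3 n)$, with $\lambda$ robust. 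Your treatment of (1), (3), (4) is in the same spirit as the paper's (which simply defers to Ding--Peres, pairing the robust spectral-gap or log-Sobolev lower bounds with evolving-sets/hypercontractivity upper bounds), and those cases are fine modulo details, though for the hypercube note that a plain coupon-collector lower bound is not obviously robust---after a bounded perturbation the chain is no longer a product chain---so the lower bound must be expressed through perturbation-stable quantities as in Ding--Peres. To repair case (2) you should abandon the core-plus-trees decomposition and prove matching $\Theta(\log^3 n)$ bounds via the log-Sobolev constant and the spectral gap as above.
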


\subsection{Definitions}

Given a (weighted) network $(V, E,(c_{e})_{e
\in E})$, where each edge $\{u,v\} \in E$ is endowed with a conductance (weight) $c_{u, v}=c_{v,u}>0$ (with the convention that $c_{u,v}=0$ if $\{u,v\} \notin E$),  a lazy random walk on $G=(V,E)$, $(X_t)$, repeatedly does the following: when the current state is $v\in V$,  the random walk will stay at $v$ with probability $1/2$ and move to vertex $u$ (such that $\{u,v\} \in E$) with probability $c_{u, v}/(2c_{v})$, where $c_v:=\sum_{w} c_{v, w}$.
The default choice for $c_{u, v}$ is 1 (in which case, we say that the random walk is {\em unweighted}), which corresponds to lazy simple random walk  on $G$ (in which at each step the walk with equal probability either stays put or moves to a new vertex, chosen from the uniform distribution over the neighbors of its current position). Its stationary distribution is given by $\pi(x):=c_x/c_V$, where $c_{V}:=\sum_{v \in V}c_v=2\sum_{e \in E}c_e$. This is a \emph{reversible} Markov chain, i.e.~$\pi(u) P(u,v)=\pi(v)P(v,u)$ for all $u,v \in V$, where throughout $P$ denotes the transition matrix of the walk.

\medskip

We denote by $\Pr_{x}^t$ (resp.~$\Pr_{x}$) the distribution of $X_t$ (resp.~$(X_t)_{t \ge 0 }$), given that the initial state is $x$.  
Let $\mu,\nu$ be two distributions on the state space $\Omega$. Denote $a_{\mu,\nu,\pi}:=\frac{|\mu (x)-\nu (x)|}{\pi (x)} $. 
  The family of $L_p$ distances   is 
defined as follows.
\begin{equation}
\label{eq: Lpdef}
\|\mu-\nu \|_{p,\pi}:=\begin{cases}\left(\sum_{x} \pi(x) a_{\mu,\nu,\pi}^p  \right)^{1/p}, & 1 \le p<\infty, \\
\max_{x\in \Omega} a_{\mu,\nu,\pi}, & p = \infty \\
\end{cases}
\end{equation}
($p=1$ gives twice the total-variation distance, i.e.~$\|\mu-\nu \|_{1,\pi}=2\|\mu-\nu \|_{\TV}$).  The $\epsilon$-$L_{p}$\textbf{-mixing-time} of the chain is defined as
\begin{equation}
\label{eq: tau_p}
\tau_{p}(\epsilon):= \min \{t: \max_x \|\Pr_x^t-\pi \|_{p,\pi}\le \epsilon \}.
\end{equation}
When $\epsilon=1/2$ we omit it from the notation and terminology (i.e.~the $L_p$ mixing time is defined as the $1/2$-$L_p$ mixing time). 

\begin{definition}
\label{def: RI}
Let $G_i:=(V_i,E_i)$ ($i=1,2$) be two finite graphs. For $u,v \in V_i$, let $d_i(u,v)$ be graph distance (w.r.t.~$G_i$) between $u$ and $v$ (i.e.~the number of edges along the shortest path in $G_i$ between $u$ and $v$). We say that $f:V_1 \to V_2$ is a $K$-\emph{\textbf{rough isometry}} of $G_1$ and $G_2$ if 
\begin{itemize}
\item[(1)]
\[ \forall u,v \in V_1, \quad  (d_1(u,v)-1)/K \le d_2(f(u),f(v)) \le K(d_1(u,v)+1). \]

\item[(2)]For every $w \in V_2$, there exists some $v \in V_1$ such that $d_2(f(v),w) \le K $.
\end{itemize}
We say that $G_1 $ and $G_2$ are $K$-\emph{\textbf{roughly isometric}} if there exists such $f$ as above.
\end{definition}
We use the convention that $C,C',C_1,\ldots$ (resp.~$c,c',c_1,\ldots $) denote positive absolute constants which are sufficiently large (resp.~small). Different appearances of the same constant at different places may refer to different numeric values.
\subsection{The transitive case}
The following question is re-iterated from \cite{cf:Ding} (over there it is asked for the $L_1$ mixing time) and essentially also from \cite{cf:Pittet}.
\begin{question}
\label{q:tran}
Are transitive graphs robust?
\end{question}
Let $G=(V,E)$ be a finite connected transitive graph. Denote the eigenvalues of $I-P$ (where $P$ is the transition matrix of lazy simple random walk on $G$) by $0=\lambda_1<\lambda_2 \le \cdots \le \lambda_{|V|}$.
Since for transitive graphs the mixing time is the same from all initial states, averaging over the starting position yields that (cf.\ \cite[p.~284]{cf:Aldous}) for all $x$ and $t \ge 0$
$$\|\Pr_x^{2t}-\pi \|_{\infty,\pi}=\|\Pr_x^t-\pi \|_{2,\pi}^2 =\sum_{y} \pi(y)\|\Pr_y^t-\pi \|_{2,\pi}^2  = \sum_{i=2}^{|V|}\lambda_i^{2t} $$
(transitivity is used only in the middle equality).
Since  the eigenvalues of $I-P$ are robust (e.g.~\cite[Corollary 8.4]{cf:Aldous}), it follows that transitive graphs are robust under bounded perturbations which preserve transitivity.  A positive answer to Question \ref{q:tran} will be obtained by a positive answer to the following question. Is it the case that for  transitive graphs, also after a bounded perturbation, the ratio of the $L_2$ mixing time starting from the worst initial point with that starting from the best initial point is bounded? 
\subsection{Related work}
\label{s: related}
It is classical that under reversibility the $L_1$ mixing time can be characterized using hitting times of sets which are ``worst" in some sense (e.g.\ \cite[Ch.\ 24]{cf:LPW}). Thus in order to show that it is not robust, it suffices to construct an example in which hitting times are not robust. As we explain below, this is somewhat easier. 

Recently, the author and Peres obtained a characterization of the $L_2$ mixing time in terms of hitting time distributions. Namely, Theorem 1.1 in \cite{cf:LS} asserts that under reversibility the $L_2$ mixing time is within some universal constant from the minimal time $t$ such that for every set $A$ of stationary probability at most $1/2$, the probability that $A$ is not escaped from by time $t$ is at most $ \pi(A)+ \frac{1}{2} \sqrt{ \pi(A) (1-\pi(A))}$. While we do not use this characterization as part of our analysis of the example from the proof of Theorem \ref{thm: 1}, it guided us in its construction and in the choices of certain parameters. 

Ding and Peres \cite{cf:Ding} constructed a sequence of bounded degree graphs satisfying that the order of the total-variation mixing times strictly increases as a result of a certain sequence of bounded perturbations of the edge weights. Their construction was refined by the author and Peres in \cite{HP} (Theorem 3), which contains various additional results concerning sensitivity of mixing times and the cutoff phenomenon under small changes to the geometry of the chain. 

Our construction from the proof of Theorem \ref{thm: 1}  uses a key observation from \cite{cf:Ding}. Namely, that the harmonic measure of the walk on a tree may change drastically as a result of a bounded perturbation, and that this can be used to create examples in which hitting times are not robust in the following sense. The order of the expected hitting time of some large set $A$ which is ``worst" in some sense (both before and after the perturbation), starting from the worst initial state, may change as a result of the perturbation. The idea of exploiting the non-robustness of the harmonic measure was originally used by Benjamini \cite{cf:Benjamini} to study instability of the Liouville property.

Both in Ding and Peres' construction and in our construction the chain mixes rapidly (there in total-variation and here in $L_{2}$) once it reaches a certain ``huge" expander, $H$ (and because $H$ carries most of the stationary probability of the walk, the walk cannot mix before reaching it). We use the fact that the harmonic measure is sensitive (in the sense mentioned in the previous paragraph) in order to create ``shortcuts"  to $H$ for the walk on the original graph, $G_n$, which are essentially ``invisible" for the walk on the 2-stretched graph, $G_n'$. 

In order to change the mixing time in total-variation it suffices to change the order of the expected hitting time of $H$, starting from the worst initial state. However, this does not suffice in order to change the order of the $L_{\infty}$ mixing time. Thus we will work much harder and show that for every initial vertex $x$  the following holds. The probability that the walk on the original graph does not reach $H$ in some $t \ll \tau_1(G_n')$ steps (through one of the aforementioned shortcuts) is much smaller than  the $L_{\infty}$ distance from stationarity of the distribution of the walk at time $t$, conditioned to not reach $H$ by that time. In order to achieve this, we will plant more shortcuts to $H$ in regions in which the chain mixes slower. 

\medskip

In \cite{cf:Kozma} Kozma constructed a sequence of finite reversible Markov chains satisfying that $\rho^{(n)} \ge c \tau_{\infty}^{(n)} \log |\log (\min_x \pi_n (x) )| $,   for all $n$, where $\rho^{(n)}$ is the spectral profile bound (see \eqref{eq: SPbound}) on the $L_{\infty}$ mixing time of the $n$-th chain in the sequence, $\tau_{\infty}^{(n)} $. In Kozma's construction there are $ n-\lceil \log n \rceil +1 $ ``islands" $H_{\lceil \log n \rceil},\ldots,H_n$, each of size $2^{2^{n}}$. The weights are chosen so that the stationary distribution is the uniform distribution. In his example  the only way for the chain to ``escape" from some $H_i$ is by moving to a random state picked according to the uniform distribution on the state space, which occurs in each step w.p.~$n2^{-n} $.

\medskip As in Kozma's construction there is a weighted edge between all pairs of vertices, it is not clear that such a construction is possible in the bounded degree setup. Nevertheless, our example uses several ideas from Kozma's construction.  \begin{itemize}

\item In Kozma's construction, each $H_i$ has vertex set $A_i \times B_i$. The network on $H_i$ can be described as a ``Cartesian product" of the complete graphs on $A_i$ and $B_i$, in which the walk updates its $B_i$ co-ordinate at a rate $\alpha_i(n)=o(1)$.

 In our construction we use a bounded degree analog of the aforementioned network, which we denote by $U_i$. Namely, we replace the complete graphs by expanders. In order to delay the rate of transitions along the expander on the co-ordinate corresponding to $B_i$ in Kozma's construction, while keeping the graph unweighted, we stretch each of its edges by a factor $\ell_i(n) \to \infty $. 

\medskip

\item
In Kozma's example, for all $i$, $|A_i|=2^{2^{n}-2^{i}}$ and so $\pi(A_{i+1}) \approx \pi (A_i)^2 $. Moreover, each $A_i$ has roughly the same contribution to the spectral profile bound (see \eqref{eq: SPbound}). This is achieved by tuning the rates $\alpha_i(n)$ in an appropriate manner. Namely, by setting $\alpha_{i+1}(n) =2\alpha_i(n)  $, for all $i$. As noted in Remark \ref{rem: spectral}, in some sense, such behavior of the spectral profile is necessary in order for it to overshoot $\tau_{\infty} $ by an optimal factor (of order $\log \log |V|$). In our construction we will take $\ell_{i}^2(n)=2\ell_{i+1}^2(n)$ in order to obtain the same effect.    

\medskip

\item Recall that in Kozma's construction each pair of vertices are connected by a weighted edge, which has the effect of bringing the chain to stationary ``at once" at a fixed rate, and this is the only way the chain can escape from the ``island", $H_i$, it started at. In our construction we need to  somehow imitate this behavior (in a bounded degree, unweighted fashion). At the same time, as noted in Remark \ref{rem: spectral}, we need that after stretching some of the edges by a  factor of two, the walk must sequentially move through all of the islands in order to mix in total-variation. 

In order to achieve this behavior, we ``stitch" the ``islands" together so that the walk can escape from each of them either to a huge expander $H$ (once it is reached, the walk mixes rapidly) or to an island with adjacent index. The islands are glued to each other and to the expander $H$ in a way that allows us to manipulate (by stretching some edges) the probability of escaping an ``island" by reaching the expander $H$. 
\end{itemize}

\section{Preliminaries}
\label{s: Pre}
Recall that the \emph{spectral gap} of a reversible Markov chain with transition matrix $P$ on a finite state space is defined as the smallest non-zero eigenvalue of $I-P$. We say that a graph $G$ is a $\gl$-expander if the spectral gap of lazy simple random walk on $G$, denoted by $\gl(G)$, is at least $\gl$. We say that a sequence of graphs $(G_n)_{n \in I }$ is an \emph{expander family} if $\inf_{n \in I} \gl (G_n)>0 $. As mentioned in \S~\ref{s: related}, we shall use expanders as building blocks in our construction.
By abuse of terminology, below we often refer to a single graph as an ``expander". What we actually mean by that is that all of the graphs we refer to as expanders in the construction of the family of graphs we construct form together an expander family. 
\begin{definition}
\label{def: Cheeger}
Consider a Markov chain chain on a finite state space $\Omega $ with transition matrix $P$ and stationary distribution $\pi$. We define the {\em Cheeger constant} of the chain as  
\begin{equation*}
 \Phi:=\min_{A:0< \pi(A) \le 1/2}Q(A)/\pi(A), \quad \text{where} \quad Q(A):=\sum_{x \in A,y \notin A }\pi(x)P(x,y).
\end{equation*}
\end{definition}
The following  is the well-known discrete analog of Cheeger inequality. 
\begin{theorem}[e.g.~\cite{cf:LPW}, Theorem
13.14]
\label{thm: Cheeger}
If $P$ is reversible then 
\begin{equation}
\label{eq: Sinclair}
\Phi^2/2 \le \gl \le 2\Phi. 
\end{equation}
\end{theorem}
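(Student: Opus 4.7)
The plan is to prove both inequalities via the variational (Rayleigh quotient) characterization of the spectral gap: under reversibility,
\[
\gl = \min\left\{ \frac{\cE(f,f)}{\Var_\pi(f)} : f \text{ non-constant}\right\}, \qquad \cE(f,f):= \tfrac12 \sum_{x,y} \pi(x)P(x,y)(f(x)-f(y))^2.
\]
Both directions reduce to comparing $\cE$ with quantities controlled by $\Phi$.

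For the easy inequality $\gl \le 2\Phi$, I would plug in the test function $f = \ind_A$, for an arbitrary set $A$ with $0< \pi(A) \le 1/2$. A direct computation (using reversibility to symmetrize) gives $\cE(\ind_A,\ind_A)=Q(A)$, while $\Var_\pi(\ind_A)=\pi(A)(1-\pi(A)) \ge \pi(A)/2$. Hence $\gl \le Q(A)/\Var_\pi(\ind_A) \le 2Q(A)/\pi(A)$, and taking the infimum over $A$ yields $\gl \le 2\Phi$.

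The harder direction $\Phi^2/2 \le \gl$ is the main obstacle. I would take an eigenfunction $\varphi$ of $I-P$ with eigenvalue $\gl$, flip its sign if needed so that $\pi(\{\varphi>0\}) \le 1/2$, and set $f:=\varphi^+$. A short pointwise inequality $(a-b)(a^+-b^+) \ge (a^+-b^+)^2$ combined with the eigenfunction relation yields the key bound $\cE(f,f) \le \gl \|f\|_{2,\pi}^2$. The heart of the argument is then a coarea / layer-cake computation applied to $f^2$: writing $A_t=\{x: f(x)^2 > t\}$, one checks
\[
\sum_{x,y}\pi(x)P(x,y)|f^2(x)-f^2(y)| \;=\; 2\int_0^\infty Q(A_t)\dd t \;\ge\; 2\Phi\int_0^\infty \pi(A_t)\dd t \;=\; 2\Phi \|f\|_{2,\pi}^2,
\]
where the middle inequality uses that $\pi(A_t)\le 1/2$ for every $t>0$ so Definition~\ref{def: Cheeger} applies.

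To close the loop I would factor $|f^2(x)-f^2(y)|=|f(x)-f(y)|\cdot|f(x)+f(y)|$ and apply Cauchy--Schwarz in the measure $\pi(x)P(x,y)$. The $|f-f|^2$ factor produces $2\cE(f,f)$, while the $(f+f)^2$ factor is bounded by $2\sum_{x,y}\pi(x)P(x,y)(f(x)^2+f(y)^2)=4\|f\|_{2,\pi}^2$ using reversibility and $\sum_y P(x,y)=1$. Combining the two estimates yields $\Phi\|f\|_{2,\pi}^2 \le \sqrt{2\cE(f,f)}\cdot \|f\|_{2,\pi}$, hence $\Phi^2 \|f\|_{2,\pi}^2 \le 2\cE(f,f) \le 2\gl \|f\|_{2,\pi}^2$, giving $\Phi^2/2 \le \gl$. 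The step I expect to require the most care is establishing $\cE(\varphi^+,\varphi^+) \le \gl\|\varphi^+\|_{2,\pi}^2$ for the truncated eigenfunction — this is the mechanism that lets us restrict attention to sets of stationary measure at most $1/2$ and is the reason reversibility enters in an essential way.
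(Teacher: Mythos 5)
Your proposal is correct, and it is essentially the standard argument: the paper does not prove this statement but quotes it from Levin--Peres (Theorem 13.14), and your route --- the Rayleigh quotient with $f=\ind_A$ for the upper bound, and for the lower bound the truncated eigenfunction $\varphi^+$ with $\cE(\varphi^+,\varphi^+)\le \gl\|\varphi^+\|_{2,\pi}^2$, a layer-cake identity for $f^2$ giving $2\int_0^\infty Q(A_t)\dd t$, and Cauchy--Schwarz --- is exactly the proof given there. The one hypothesis worth stating explicitly is irreducibility (so that the variational characterization of $\gl$ applies and $\varphi$ changes sign, making $\varphi^+\not\equiv 0$ after the sign flip), but with that noted your argument is complete.
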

%
By \eqref{eq: Sinclair} a sequence of graphs $(G_n)_{n \in I }$ is an expander family iff $\inf_{n \in I } \Phi(G_n)>0$
.

The following proposition will be useful in what comes.
\begin{proposition}
\label{p: LS}
 There exists a constant $c_d> 0$ (depending only on $d$) such that if $H$ is a simple graph of maximal degree $d$ and
 $G$ is a $K$-stretch of $H$, then 
 \begin{equation}
 \label{eq: LS1}
 \Phi(G) \ge c_d \Phi(H)/K \text{ and so }\gl(G) \ge c_d^{2}\Phi^2(H)/(2K^2).
 \end{equation}  
\end{proposition}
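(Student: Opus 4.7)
I focus on the first inequality; the bound on $\gl(G)$ then follows immediately from Cheeger's inequality $\gl \ge \Phi^{2}/2$ (Theorem \ref{thm: Cheeger}) applied to $G$. Fix $S\subset V(G)$ with $\pi_G(S)\le 1/2$, write $A:=S\cap V(H)$ and $S_{2}:=S\setminus V(H)$, and recall that the target is $|\partial_G S|/(2\mathrm{vol}_G(S))\ge c_{d}\Phi(H)/K$.

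The structural observation driving the plan is that each edge $\{u,v\}\in\partial_H A$ is replaced in $G$ by a path $\gamma_{uv}$ of length at most $K$ with $u\in S$ and $v\in S^{c}$, so $\gamma_{uv}$ must contain at least one edge of $\partial_G S$; distinct edges of $\partial_H A$ correspond to edge-disjoint paths in $G$, hence
\[
|\partial_G S| \ge |\partial_H A|.
\]
From here I would split into two regimes. In the main regime, where $\pi_H(A)\le 1/2$ and $\mathrm{vol}_H(A)\ge \mathrm{vol}_G(S)/(CK)$ for a large enough absolute constant $C$, Cheeger's inequality applied to $H$ gives $|\partial_H A|\ge 2\Phi(H)\mathrm{vol}_H(A)$, and combining with the displayed inequality yields $|\partial_G S|/(2\mathrm{vol}_G(S))\ge \Phi(H)/(CK)$, as required. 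If instead $\pi_H(A)>1/2$, the same argument is run with $A$ replaced by $B:=V(H)\setminus A$, using $|\partial_H B|=|\partial_H A|$.

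The residual regime is where $\mathrm{vol}_H(A)<\mathrm{vol}_G(S)/(CK)$, so that $|S_{2}|$ dominates $\mathrm{vol}_G(S)$. Here the bound has to come from the isoperimetry of the stretched paths themselves (each a path graph of length $\le K$, with Cheeger constant of order $1/K$). For every stretched path $p$, the restriction $S\cap V(p)$ contributes at least one edge to $\partial_G S$ unless $V(p)\subset S$ or $V(p)\cap S=\emptyset$; a ``fully engulfed'' path (with $V(p)\subset S$) must have both endpoints in $A$, so the total number of interior vertices of $S_{2}$ contained in such paths is at most $(K-1)|E(H[A])|\le (K-1)\mathrm{vol}_H(A)/2$, which is a small fraction of $|S_{2}|$ once $C$ is large enough. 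Aggregating the within-path boundary contributions from the remaining paths with $S\cap V(p)\notin\{\emptyset,V(p)\}$ gives $|\partial_G S|\gtrsim \mathrm{vol}_G(S)/K$, and since $\Phi(H)\le 1/2$ this exceeds the required $c_{d}\Phi(H)\mathrm{vol}_G(S)/K$. The main obstacle is the bookkeeping in this residual regime, in particular handling ``mixed'' paths with exactly one endpoint in $A$ whose within-path boundary is only $1$ per up to $K-1$ interior vertices in $S$; the degree bound $d$ enters precisely through the count $|E(H[A])|\le d|A|/2$ used to control the engulfed paths, forcing the constant $c_{d}$ to depend on $d$ (but not on $K$).
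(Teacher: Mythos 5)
Your route is genuinely different from the paper's: the paper reduces the statement to \cite[Claim 2.2]{cf:LS} by taking a near-optimal connected cut in $G$ and contracting the stretched segments, whereas you argue directly by classifying the cut edges produced by an arbitrary competitor set $S$. That direct strategy can be made to work (and, incidentally, your own bookkeeping uses $|E(H[A])|\le \mathrm{vol}_H(A)/2$, so it would give a constant not depending on $d$), but as written one branch of your case analysis fails.

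The problem is the sub-case $\pi_H(A)>1/2$. Replacing $A$ by $B:=V(H)\setminus A$ and using $|\partial_H B|=|\partial_H A|\ge 2\Phi(H)\,\mathrm{vol}_H(B)$ only yields the target if you also know $\mathrm{vol}_H(B)\gtrsim \mathrm{vol}_G(S)/K$, and nothing forces that. Concretely, stretch every edge of $H$ by $K\ge 2$ and take $S=V(H)$: then $\mathrm{vol}_G(S)=2|E(H)|$ while $\mathrm{vol}_G(V(G))=2K|E(H)|$, so $\pi_G(S)=1/K\le 1/2$ and $S$ is a legitimate competitor; here $A=V(H)$, $B=\emptyset$, and $|\partial_H A|=|\partial_H B|=0$, so your chain of inequalities gives only $|\partial_G S|\ge 0$, and this $S$ is not in your residual regime either, since $\mathrm{vol}_H(A)=\mathrm{vol}_G(S)$. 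In such cases the boundary must come from stretched paths whose interiors are (partly) outside $S$ even though both endpoints lie in $S$: since $\pi_G(S)\le 1/2$, the interior volume of $S^{c}$ is at least $\mathrm{vol}_G(S)-\mathrm{vol}_H(B)$, the paths fully engulfed by $S^{c}$ are controlled by $|E(H[B])|\le \mathrm{vol}_H(B)/2$, and each remaining path donates at least one cut edge per at most $K-1$ interior vertices. In other words, you need a mirror image of your residual-regime argument run on $S^{c}$ (using that $\partial_G S=\partial_G S^{c}$), or equivalently a case split keyed on $\min(\mathrm{vol}_H(A),\mathrm{vol}_H(B))$ rather than on $\mathrm{vol}_H(A)$ alone. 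With that branch added, the plan is sound.
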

\begin{proof}
The argument in \cite[Claim 2.2]{cf:LS} covers the case in which $|V(G)| \le \frac{3}{2}|V(H)|$.  We shall reduce the general case to this case.
It is easy to see that the minimum in the definition of $\Phi$ is always attained by a connected set. For $e=\{u,v\} \in E(H)$ let $\gamma_e$ be the collection of internal vertices along the  segment from $u$ to $v$ in $G$ which replaced the edge $e$ (if $e$ was not stretched, then $\gamma_e$ is empty). Let $\pi_G$ be the stationary distribution of the walk on $G$. Similar reasoning as in \cite[Claim 2.2]{cf:LS} shows that for some connected set $B$ 
\begin{itemize}
\item[(i)]  $ Q_G(B)/\pi_{G}(B) \le C_d \Phi(G)  $ (where $Q_G(B)$ denotes $Q$ w.r.t.~the graph $G$).
\item[(ii)] There is at most one $e\in E(H) $  so that $\gamma_e \setminus B$ an $\gamma_e \cap B $ are both non-empty. However, even if such $e$ exists, $\gamma_e \cap B $ is connected. 
\end{itemize}
The operation of contracting a set of vertices $D$ is defined as follows. Replace all of $D$ by a single vertex $x$ and for every edge $\{u,v\} $ with $u$ in $D$ and $v \notin D$, replace it with an edge $\{x,v\} $ and if also $v \in D$ replace it by a loop at $x$ of weight 2. Let $G_2$ be the network obtained by contracting each $\gamma_{e'}$, for all $e' \in E(H)$, apart from $\gamma_e$ for the $e$ so that $\gamma_e \setminus B$ an $\gamma_e \cap B $ are both non-empty (if such $e$ exists). If such $e$ exists, we contract $\gamma_e \cap B $ and also $(\gamma_e \setminus B) \cup \{v\} $, where $v$ is the endpoint of $e$ incident to $(\gamma_e \setminus B) $. 

Let $G_3$ be the graph obtained by deleting all loops from $G_2$. It is straightforward to check that $\Phi(G_2) \ge\Phi(G_3)/(2K) \ge c_d' \Phi(H)/K$ (where the second inequality follows from \cite[Claim 2.2]{cf:LS}). Conversely, by (i)-(ii),  $\Phi(G_2) \le Q_G(B)/\pi_{G}(B) \le C_d \Phi(G)  $ (where the first inequality is obtained by considering the set $B'$ in $G_2$ obtained from $B$ by replacing each $\gamma_{e'} \subset B $ by the corresponding vertex in $G_2$) and so indeed $\Phi(G) \ge c_d \Phi(H)/K$. 
\end{proof}
The Poincar\'e (spectral gap) inequality asserts that when time is scaled according to the inverse of the spectral gap, the $L_2$ distance from stationarity of every distribution decays exponentially in the number of (scaled) time units.
\begin{lemma}
\label{lem: Poincare}
Let $(\Omega,P,\pi)$ be a finite lazy  irreducible reversible Markov chain with spectral gap $\gl$.
Let
$\mu $ be a distribution on $\Omega$. Then
\begin{equation}
\label{eq: L2contraction}
 \|\Pr_\mu^t-\pi \|_{2,\pi} \le  e^{-\gl t}
\|\mu-\pi \|_{2,\pi}, \text{ for all }t \ge 0.
\end{equation}
\end{lemma}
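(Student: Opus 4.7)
The plan is to exploit reversibility to diagonalise $P$ on $L^2(\pi)$ and read the decay off the eigenvalues. Since the chain is lazy and reversible, $P$ is a self-adjoint contraction on $L^2(\pi)$ with all eigenvalues in $[0,1]$; list them $1=\beta_1\ge\beta_2\ge\cdots\ge\beta_{|\Omega|}\ge 0$, so $\beta_2=1-\gl$, and fix an orthonormal eigenbasis $(\varphi_i)$ with $\varphi_1\equiv 1$.

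The key identity is to pass from distributions to densities. Set $f:=\mu/\pi$ and $f_t:=(\mu P^t)/\pi$. A short calculation using reversibility ($\pi(y)P(y,x)=\pi(x)P(x,y)$) shows $f_t(x)=\sum_y f(y)P^t(x,y)=(P^t f)(x)$, so evolving the distribution under $P^t$ corresponds to applying $P^t$ to the density in $L^2(\pi)$. By construction,
\[
\|\mu P^t-\pi\|_{2,\pi}^2=\sum_x\pi(x)(f_t(x)-1)^2=\|P^t f-1\|_{2,\pi}^2.
\]

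Next I would expand $f-1=\sum_{i\ge 2}a_i\varphi_i$ (the coefficient of $\varphi_1=1$ vanishes because $\langle f,1\rangle_\pi=\sum_x\mu(x)=1$). Since $P\varphi_1=\varphi_1$ and $P$ is linear, $P^t f-1=P^t(f-1)=\sum_{i\ge 2}a_i\beta_i^t\varphi_i$, and Parseval gives
\[
\|P^t f-1\|_{2,\pi}^2=\sum_{i\ge 2}a_i^2\beta_i^{2t}\le\beta_2^{2t}\sum_{i\ge 2}a_i^2=(1-\gl)^{2t}\|f-1\|_{2,\pi}^2.
\]
Finally, using $1-\gl\le e^{-\gl}$ and taking square roots yields the claimed bound $\|\Pr_\mu^t-\pi\|_{2,\pi}\le e^{-\gl t}\|\mu-\pi\|_{2,\pi}$.

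There is essentially no obstacle: every step is routine given reversibility and the spectral theorem for self-adjoint operators on a finite-dimensional inner product space. The only point that deserves a line of justification is the identity $f_t=P^t f$, which is where reversibility enters; once that is in place, the orthogonality $(f-1)\perp\varphi_1$ and the spectral decomposition do all the work.
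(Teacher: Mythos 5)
Your proof is correct, and it is exactly the standard argument: the paper states Lemma \ref{lem: Poincare} as a known fact without giving a proof, and the spectral decomposition you use (passing to the density $f=\mu/\pi$, noting $f_t=P^tf$ via reversibility, expanding $f-1$ in an orthonormal eigenbasis orthogonal to $\varphi_1\equiv 1$, and using laziness to ensure all eigenvalues of $P$ lie in $[0,1]$ so that the decay rate is governed by $\beta_2=1-\gl\le e^{-\gl}$) is precisely the standard proof one would cite for it. No gaps.
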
  
\begin{definition}
\label{def: lambdaA}
Let $(\Omega,P,\pi)$  be reversible. Let $A \varsubsetneq \Omega $. We define $\gl(A)$ (``the spectral gap of the set $A$") to be the smallest eigenvalue of the substochastic matrix obtained by restricting $I-P$ to $A$. Similarly, define $\Phi(A)= \min_{B \subset A }Q(B)/\pi(B) $. 
\end{definition}
The following extension of \eqref{eq: Sinclair} is due to Goel et al.~\cite[(1.4) and Lemma 2.4]{cf:Spectral}. For every irreducible reversible chain, and every set $A$ with $\pi(A) \le 1/2$ we have that
\begin{equation}
\label{eq: restrictedcheeger}
\Phi^2(A)/4 \le \gl(A) \le \Phi(A). 
\end{equation}

\medskip

The \emph{hitting time} of a set $D$ is defined as $T_D:=\inf \{t:X_t \in D\} $. Using the spectral decomposition of $P_A$, the restriction of $P$ to the set $A$ (e.g.~\cite[Lemma 3.8]{cf:Basu}), it is easy to show that for every set $A$ and $a,a' \in A$ we have that $P_A^t(a,a') \le \sqrt{\frac{\pi(a')}{\pi(a)}}e^{- \gl(A) t}$ and so
\begin{equation}
\label{eq: exitprob}
\forall a \in A, \quad \Pr_{a}[T_{\Omega \setminus A} > t]= \sum_{b \in A}P_A^t(a,b) \le |A| \max_{b \in A}\sqrt{\frac{\pi(b)}{\pi(a)}}e^{- \gl(A)t}.
\end{equation} 
Finally, we recall that the spectral profile upper bound on $\tau_{\infty} $ is \cite{cf:Spectral}
\begin{equation}
\label{eq: SPbound}
\rho:=8\lambda^{-1}\log 2 + \int_{\min_x \pi(x)}^{1/2}\frac{4dv}{v \Lambda(v) }, \quad \text{where} \quad \Lambda(v):=\inf_{A \subset \Omega :\pi(A) \le v}\lambda(A).
\end{equation} 
\section{Proof of Theorem \ref{thm: 1}}
\subsection{The construction}
For notational convenience we often omit ceiling signs. As described in \S~\ref{s: related} we shall construct graphs $U_n,\ldots,U_1$. For all $i$ we will have $$ 2^{2^{3n}+5n} \le |V(U_i)| \le C 2^{2^{3n}+6n}.$$ We will then ``stitch" them together to obtain the ultimate graph $G_n=(V_n,E_n)$. Hence
\begin{equation}
\label{eq: sizeGn}
n 2^{2^{3n}+5n} \le |V_n| \le Cn 2^{2^{3n}+6n}.
\end{equation}

For all $i$, the graph $U_i$ will be a Cartesian product of an expander $H_i=(V(H_i),E(H_i))$ with a graph $W_i $ (obtained by making a small modification to a certain tree $\cT_i $),
of sizes 
\begin{equation}
\label{eq: sizeHi}
2^{2^{3n}-2^{2n+i}} \le |V(H_i)| \le  2^{2^{3n}-2^{2n+i}+n-i}, \quad 2^{2^{2n+i}+5n} \le |V(W_i)| \le C  2^{2^{2n+i}+5n},
\end{equation}

\begin{itemize}
\item[Step 1.1]
Let $1 \le i \le n$. We now construct the tree $\cT_i$ which shall have roughly $2^{2^{2n+i}}$ \emph{good leafs} $\mathrm{GL}_i$ and roughly $2^{2^{2n+i-1}}$ \emph{bad leafs} $\mathrm{BL}_i$ (apart from $i=1$ which only has good leafs). It will be obtained by stretching the edges of a tree $\cT_{\mathrm{bs},i} $ ($\mathrm{bs}$ stands for ``before stretching") which is a ``binary tree", rooted at $o_i$, whose good leafs,  $\mathrm{GL}_i$,  are of depth $2^{2n+i}$, while its bad leafs,  $\mathrm{BL}_i$,  are all of some other depth, $j_i$. The sets of good leafs of the two trees $\cT_{\mathrm{bs},i} $ and $\cT_i $ are the same, and likewise for the sets of bad leafs. Note that usually a finite binary tree is defined so that all of its leafs are of the same depth, while here, crucially, the leaf set is partitioned into two sets of different depths. By abuse of terminology, we still refer to such a tree as a finite binary tree.  

\medskip
 
\item  We first describe the construction of $\cT_1 $ as it is simpler. Take a binary tree, $\cT_{\mathrm{bs},1}$, of depth $2^{2n+1}$ rooted at $o_1$. Denote its leafs by $\mathrm{GL}_1$. Then stretch each of its edges by a factor of $2^{5n}$.  

\medskip

\item We now construct $\cT_i$ for $1<i \le n $ in several steps. Before describing  $\cT_{\mathrm{bs},i} $ we consider a binary tree of depth $2^{2n+i-2}$, rooted at $o_i$, denoted by   $\cT_{\mathrm{fh},i} $  
($\mathrm{fh}$ stands for ``first half", as    $\cT_{\mathrm{fh},i} $  
  is the ``first half" of $\cT_{\mathrm{bs},i} $).  

\medskip

\item[Step 1.2]
For every vertex $u $ which is not a leaf of $\cT_{\mathrm{fh},i} $  
  we distinguish its two children by \emph{left} and \emph{right} child. For every vertex $u \in     \cT_{\mathrm{fh},i} $  
   let $\mathrm{Left}(u)$ (resp.~$\mathrm{Right}(u)$) be the number of left (resp.~right) children along the path from $o_{i}$ to $u$. Let 
\begin{equation}
\label{eq: g}
g(u)=\mathrm{Left}(u)-\mathrm{Right}(u).
\end{equation}
Denote the $k$-th level of a rooted tree $\cT$ by $\cL_k(\cT)$. We partition the leaf set of  $\cT_{\mathrm{fh},i} $  
   $\cL_{2^{2n+i-2}}(\cT_{\mathrm{fh},i}) $, into two parts: $\mathrm{GMP}_i$, the set of \emph{good middle points} and $\mathrm{BMP}_i$, the set of \emph{bad middle points} (they are ``middle points" w.r.t.~$\cT_{\mathrm{bs},i}$) defined as follows: 
$$\mathrm{GMP}_i:=\{u \in \cL_{2^{2n+i-2}}(\cT_{\mathrm{fh},i}) : g(u)\le 2^{2n+i-6} \} \text{ and }\mathrm{BMP}_i:= \cL_{2^{2n+i-2}}(\cT_{\mathrm{fh},i}) \setminus \mathrm{GMP}_i. $$
We now extend $\cT_{\mathrm{fh},i}$ so that the good leafs,   $\mathrm{GL}_i$, (resp.~bad leafs $\mathrm{BL}_i$), of the resulting tree, $\cT_{\mathrm{bs},i}$, will be the leafs which are decedents of $\mathrm{GMP}_i$ (resp.~$\mathrm{BMP}_i$).

\medskip

\item Attach to each vertex in $\mathrm{GMP}_i$ a binary tree of depth $2^{2n+i}-2^{2n+i-2}$. This makes the total number of leafs that have a vertex in $\mathrm{GMP}_i$ as an ancestor  $2^{2^{2n+i}}(1-o(1))$. 

\medskip
 
\item  Attach to each vertex in $\mathrm{BMP}_i$ a binary tree of depth $\lceil \log_2(2^{2^{2n+i-1}} /|\mathrm{BMP}_i| )\rceil $ so that the set of leafs that have a vertex in $\mathrm{BMP}_i$ as an ancestor, $\mathrm{BL}_i$, is of size 
\begin{equation}
\label{eq: sizeofBLi}
2^{2^{2n+i-1}} \le |\mathrm{BL}_i| \le 2^{2^{2n+i-1}+1} .
\end{equation}
Call the resulting tree $\cT_{\mathrm{bs},i}$.

\medskip

\item[Step 1.3] Denote the union of $\mathrm{GMP}_i$ with the collection of vertices of $\cT_{\mathrm{bs},i} $ which have an ancestor in $\mathrm{GMP}_i$ by $ \mathrm{GS}_i$ (a shorthand for ``good side"). 

\medskip

\item 

For each $1<i \le n$, stretch each edge of $\cT_{\mathrm{bs},i} $ that both of its end-points lie in  $ \mathrm{GS}_i$  by a factor of $2^{5n} $.
 Stretch each of the rest of the edges of $\cT_{\mathrm{bs},i} $ by a factor of
\begin{equation}
\label{eq: qi}
q_{i}:=\lceil 2^{8n-\frac{i}{2}}\rceil.
\end{equation}       
Call the resulting tree $\cT_i$.

\medskip

\item[Step 2] We now modify the tree $\cT_i $ in the region close to its leafs in order to make it an expander. We note that for $i=1$ the below modification is done only to the region close to $\mathrm{GL}_1$ as $\cT_1$ only has good leafs. Let ${\mathrm{GE}}_{i}=(V(\mathrm{GE}_{i}),E(\mathrm{GE}_{i})) $ and  ${\mathrm{BE}}_{i}=(V(\mathrm{BE}_{i}),E(\mathrm{BE}_{i}))$ ($\mathrm{E}$ stands for expander)  be 3-regular expanders of size $|\mathrm{GL}_i|/2$ and  $|\mathrm{BL}_i|/2$, resp.. Let $\mathrm{PGL}_i$ (resp.~ $\mathrm{PBL}_i$,) (a shorthand for ``parents of good (resp.~bad) leafs" w.r.t.~$\cT_{\mathrm{bs},i}$), be the collection of the $|\mathrm{GL}_i|/2$ (resp.~$|\mathrm{BL}_i|/2$) vertices of distance  $2^{5n}$ (resp.~$q_i$) w.r.t.~$\cT_i$ from $\mathrm{GL}_i$ (resp.~$\mathrm{BL}_i$). We naturally identify $\mathrm{PGL}_i$ (resp.~$\mathrm{PBL}_i$) with the collection of parents w.r.t.~$\cT_{\mathrm{bs},i} $ of the vertices in $\mathrm{GL}_i$ (resp.~$\mathrm{BL}_i$). Identify each vertex of  $\mathrm{PGL}_i $ (resp.~$\mathrm{PBL}_i$) with a vertex of $\mathrm{GE}_{i}$ (resp.~$\mathrm{BE}_i$) in a bijective manner.
Let $\cT_v=(V(\cT_v),E(\cT_v)) $ be the induced tree at $v$ w.r.t.~$\cT_i $ (i.e.~the induced tree on the set of vertices  that the path from them to $o_i$ goes through $v$).
For each $u,v \in \mathrm{PGL}_i$ (resp.~$\mathrm{PBL}_i$) such that $\{u,v\} \in E(\mathrm{GE}_{i})$ (resp.~$E(\mathrm{BE}_{i})$),  we connect each vertex $w \in V(\cT_u) $ to $\phi_{u,v}(w) \in V(\cT_v)$ by an edge, where $\phi_{u,v}$ is the trivial isomorphism of $\cT_u $ and $\cT_v $.
Call the resulting graph $W_i=(V(W_i),E(W_i))$.

\medskip

\item[Step 3] Let $1 \le i \le n-1$. Denote $s_i:=\prod_{j=i+1}^n |\mathrm{BL}_i|$. Note that by \eqref{eq: sizeofBLi}
$$2^{2^{3n}-2^{2n+i}} \le s_i \le   2^{2^{3n}-2^{2n+i}+n-i}. $$

Let $H_i=(V(H_i),E(H_i)) $ be a 3-regular expander of size $s_i $. Let $U_i=W_i \times H_i $ (a Cartesian product of $W_i$ and $H_i$). That is, $V(U_i)=V(W_i)\times V(H_i)$ and $\{ (u,h),(u',h') \} \in E(U_{i}) $ if either $u=u'$ and $\{h,h'\} \in E(H_i)$ or $h=h' $ and $\{u,u'\} \in E(W_i)$.

\medskip

We now ``stitch" together the graphs $U_n,\ldots,U_1$. We refer to  $$R_{i}:=\{o_i \} \times V(H_i) $$ (for $i=n$, $R_n:=\{o_n \}$) as the \emph{roots} of $U_i$. We refer to the set $$ \mathrm{Bad}_{i}:= \mathrm{BL}_{i} \times V(H_{i}) \quad  \text{(for }i=n,\, \mathrm{Bad}_{n}:=\mathrm{BL}_{n}) $$
 as the \emph{bad leafs} of $U_i $ (even though it is not a tree). Similarly, we refer to
$$\mathrm{Good}_{i}:= \mathrm{GL}_{i} \times V(H_{i}) $$
(for $i=n$, $\mathrm{Good}_{n}:=\mathrm{GL}_{n} $) as the \emph{good leafs} of $U_i $.

\medskip Note that $|\mathrm{Bad}_{i+1}|=|R_i |=s_{i}$, for all $i$. We shall connect $U_{i+1}$ to $U_i$ for all $1 \le i < n$, by identifying $\mathrm{Bad}_{i+1}$ with $R_i $ (step 4). We shall also connect all of the $U_i$'s ``at once" by connecting $\mathrm{Good}:=\cup_{i=1}^{n}\mathrm{Good}_{i} $ using one ``huge" expander $H$ of size $|\mathrm{Good}| $ (step 5). 

\medskip

\item[Step 4] For all $i<n$,  we take an arbitrary bijection $\phi: \mathrm{Bad}_{i+1} \to R_{i} $. We then replace each $u \in \mathrm{Bad}_{i+1}   $ and $\phi(u) $ with a new vertex, $u'$, that the set of edges which are incident to it is the union of the edges which are incident to $u$ (in $U_{i+1} $) and to $\phi(u)$ (in $U_i $). By abuse of notation we shall not distinguish between  the set $\{u':u \in \mathrm{Bad}_{i+1}  \}$ and the sets $\mathrm{Bad}_{i+1}$ and  $R_{i}$.

\medskip

\item[Step 5] We identify the set $\mathrm{Good} $ with a 3-regular expander $H=(V(H),E(H)) $ of size $|  \mathrm{Good} |  $ as follows. We label $\mathrm{Good}$ by the set $V(H)$ and connect $u,v \in \mathrm{Good} $ if $\{u,v \}\in E(H)$. Call the obtained graph $G_n=(V_n,E_n)$.

\medskip

\item[Step 6] We now describe $G_n'=(V_n',E_n')$. The only difference between it and $G_n $ is that for all $1<i \le n$ we perform the following step in between step 1.1 and step 1.2:

\medskip

\item For every non-leaf (w.r.t.~$\cT_{\mathrm{fh},i} $) vertex $u \in V(\cT_{\mathrm{fh},i}) $ and its right child $v$, we stretch the edge $\{u,v \}$ by a factor of $2$. Call the obtained graph $\cT_{\mathrm{fh},i}'$. The remaining steps are analogous to the ones in the construction of $G_n$. We spell them out for the sake of concreteness.

\medskip

\item

The leaf set of $\cT_{\mathrm{fh},i}'$ can be identified with that of $\cT_{\mathrm{fh},i}  $. Hence we may partition the leaf set of  $\cT_{\mathrm{fh},i}' $ into the same two sets  $\mathrm{GMP}_i$ and $\mathrm{BMP}_i$ defined in step 1.2 using the function $g$, taken again w.r.t.~the tree $\cT_{\mathrm{fh},i}$.  

\medskip

\item
After this is done, we can proceed with step 1.2 and obtain the tree $\cT_{\mathrm{bs},i}'$.

\medskip 

\item
We denote the tree of index $i$ obtained at the end of step 1.3 (when before step 1.2 one performs the aforementioned intermediate step) by $\cT_i'$. The only difference between $\cT_i' $ and $\cT_i$ is that the edges that were stretched by a factor $2$ in $\cT_{\mathrm{fh},i}' $ will be stretched ultimately by a total factor of $2q_{i}   $ rather than just $q_{i}$ (as in $\cT_i$). 

\medskip

\item The construction of $G_n'$ is concluded by completing the remaining steps of the construction of $G_n$ with $\cT_i'$ now playing the role of $\cT_i$ in construction of $G_n$.
%
\end{itemize}

\subsection{Analysis of the construction}
We start with the analysis of $G_n'$. We write $\Pr'$ and $\mathbb{E}'$ to denote probabilities and expectations w.r.t.~the walk on either $G_n' $, or  $\cT_i'$, for some $i$ (where the identity of the graph will be clear from context, and otherwise specified). Denote the stationary distribution of the walk on $G_n'$ by $\pi' $. We will show that (for all sufficiently large $n$) 
\begin{equation}
\label{eq: tau1Gn'}
\tau_{1}(G_n') \ge (n-1)2^{18n-2}=:\tau
\end{equation}

\medskip

Denote $\mathrm{Nice}_i=\mathrm{GMP}_i \times V(H_i) $  for $2 \le i \le n$ and $\mathrm{Nice}_1=R_1 $. Denote the connected component of $o_n $ w.r.t.~the cut $\mathrm{Nice}:=  \cup_{i=1}^n \mathrm{Nice}_i$ of the graph $G_n'$ by $\mathrm{Small}$. Recall that for every distribution $\mu $ on the state space we have that $\frac{1}{2} \|\mu-\pi '\|_{1,\pi'}=\|\mu-\pi' \|_{\TV}=\max_{A}\mu(A)-\pi'(A) $, where $\|\mu-\pi' \|_{\TV} $ is the total variation distance.  It follows that
$$ \|\Pr_{o_{n}}'(X_{\tau} \in \cdot ) - \pi '(\cdot) \|_{\TV} \ge \Pr_{o_{n}}'[X_{\tau} \in \mathrm{Small} ]- \pi'(\mathrm{Small} ) \ge \Pr_{o_{n}}'[T_{ \mathrm{Nice}}>\tau]- C_{0} 2^{-2^{n/2}},  $$
where we have used the following estimate in the last inequality
$$\pi'(\mathrm{Small} ) \le \max_{u,v} \frac{\deg (u)}{\deg(v)} \frac{|\mathrm{Small}|}{|V(G_n')|}  \le 15 |\mathrm{Small}|/|V(G_n')| \le C_{0} 2^{-2^{n/2}}.  $$ 
Hence, in order to prove \eqref{eq: tau1Gn'} it suffices to show that 
\begin{equation}
\label{eq: Easy}
\Pr_{o_{n}}'[T_{ \mathrm{Nice}}\ge \tau]=1-o(1).
\end{equation}
In order to establish \eqref{eq: Easy} we use the following lemma. 
\begin{lemma}
\label{lem:auxcalculations1}
Uniformly in $2 \le i \le n$ and $r \in R_i$, we have that 
\begin{equation}
\label{eq: hitbadbeforegood}
\Pr_{r}'[T_{\mathrm{Bad}_i } \le 2^{18 n-2}  \mid T_{\mathrm{Bad}_i}<T_{\mathrm{Nice}} ] \le \Pr_{o_i}'[T_{\mathrm{BL}_i} \le 2^{18 n-2} \mid T_{\mathrm{BL}_i} <T_{\mathrm{GMP}_i}  ]=o(1/n),
\end{equation}
\begin{equation}
\label{eq: hitprob}
\Pr_{r}'[T_{\mathrm{Nice}}<T_{\mathrm{Bad}_i} ] \le  \Pr_{o_i}'[T_{\mathrm{GMP}_i}<T_{\mathrm{BL}_i} ]+o(1/n)=o(1/n),
\end{equation}
where in the l.h.s of both \eqref{eq: hitbadbeforegood}-\eqref{eq: hitprob} the probability is taken w.r.t.~the walk on $G_n'$ and in the middle terms w.r.t.~the walk on $\cT_i'$. 
\end{lemma}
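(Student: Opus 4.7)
The plan is to first reduce both claims to statements about the walk on the single tree $\cT_i'$ and then exploit the Step-6 bias separately for the hitting probability and the hitting time.

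\emph{Reduction to $\cT_i'$.} On the event $\{T_{\mathrm{Bad}_i}<T_{\mathrm{Nice}}\}$ (and analogously on its complement), the walk on $G_n'$ starting at $r\in R_i$ cannot leave $U_i=W_i\times H_i$ before hitting $\mathrm{Bad}_i\cup\mathrm{Nice}_i$: the only other exit from $U_i$ is through $\mathrm{Good}_i=\mathrm{GL}_i\times V(H_i)$, and $\mathrm{GL}_i$ lies below $\mathrm{GMP}_i$ in $W_i$, so any such path must first pass through $\mathrm{Nice}_i=\mathrm{GMP}_i\times V(H_i)$; similarly $\mathrm{Nice}_j$ for $j<i$ can only be reached through $\mathrm{Bad}_i=R_{i-1}$. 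Projecting the walk on $U_i$ onto its $W_i$-coordinate yields a random walk on $W_i$ starting at $o_i$ which is a bounded-factor time-rescaling of SRW on $W_i$ (since $\deg_{H_i}=3$ and $\deg_{W_i}\ge 1$), so hitting quantities on $G_n'$ for $\mathrm{Bad}_i$ vs.\ $\mathrm{Nice}$ correspond to those for $\mathrm{BL}_i$ vs.\ $\mathrm{GMP}_i$ on $W_i$. Finally, $W_i$ differs from $\cT_i'$ only by expander-type edges among the subtrees hanging off $\mathrm{PGL}_i$ and $\mathrm{PBL}_i$, all strictly below $\mathrm{GMP}_i\cup\mathrm{BMP}_i$; by Rayleigh's monotonicity these edges only decrease $\Pr'[T_{\mathrm{GMP}_i}<T_{\mathrm{BL}_i}]$, giving the first inequality in \eqref{eq: hitprob}. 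For \eqref{eq: hitbadbeforegood} the walks on $W_i$ and $\cT_i'$ coincide up to the hitting time of $\mathrm{PBL}_i$ (the extra edges are below $\mathrm{PBL}_i$), and since $T_{\mathrm{BL}_i}\ge T_{\mathrm{PBL}_i}$ the analogous comparison is obtained via $T_{\mathrm{PBL}_i}$ (the $\Theta(q_i^2)$ difference between $T_{\mathrm{PBL}_i}$ and $T_{\mathrm{BL}_i}$ on $\cT_i'$ is negligible compared with $2^{18n-2}$ and is absorbed into the $o(1/n)$ slack).

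\emph{Hitting-probability bound on $\cT_i'$.} On $\cT_{\mathrm{fh},i}'$ the right-child edges have stretching $2q_i$ and left-child edges $q_i$, biasing the skeleton walk. Solving the effective-resistance recursion $1/R(k)=1/(q_i+R(k+1))+1/(2q_i+R(k+1))$ with $R(2^{2n+i-2})=0$ yields the rapidly-approached fixed point $R^*=\sqrt{2}\,q_i$, at which the conditional probability of a ``left'' skeleton step equals $(2+\sqrt{2})/(3+2\sqrt{2})=2-\sqrt{2}\approx 0.586$, strictly greater than the $\mathrm{GMP}_i$-threshold $17/32\approx 0.531$. A Chernoff bound on the binomially-concentrated number of left-turns along the path to the first-hit leaf of $\cT_{\mathrm{fh},i}$ then gives $\Pr_{o_i}'[T_{\mathrm{GMP}_i}<T_{\mathrm{BMP}_i}]\le\exp(-c\cdot 2^{2n+i-2})$. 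On the complementary event, once the walk enters the bad subtree (a binary tree of depth $\ge 2^{2n+i-2}$ with uniform stretching $q_i$), its skeleton has drift $2/3$ toward the leaves, and gambler's ruin shows that the probability of escaping back past $\mathrm{BMP}_i$ without hitting $\mathrm{BL}_i$ is $2^{-\Omega(2^{2n+i-2})}$. Combining these yields $\Pr_{o_i}'[T_{\mathrm{GMP}_i}<T_{\mathrm{BL}_i}]=o(1/n)$.

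\emph{Hitting-time lower bound on $\cT_i'$.} Since $T_{\mathrm{BL}_i}\ge T_{\mathrm{PBL}_i}$ and the conditioning event has probability $1-o(1/n)$, it suffices to show $\Pr_{o_i}'[T_{\mathrm{PBL}_i}\le 2^{18n-2}]=o(1/n)$. I would track the depth process $D_t=d_{\cT_i'}(o_i,X_t)$: $|D_{t+1}-D_t|\le 1$, the conditional drift $\mathbb{E}[D_{t+1}-D_t\mid\mathcal{F}_t]$ vanishes at subdivision vertices of $\cT_i'$ and is bounded by $1/6$ at original vertices. Writing $A_t$ for the time spent at original vertices up to time $t$, the renewal structure of chain-excursions (each of mean length $\Theta(q_i)$) gives $\mathbb{E}'[A_t]\le Ct/q_i$ with exponential concentration, so with probability $1-\exp(-\Omega(t/q_i))$ the cumulative drift $A_t/6$ is at most $O(q_i\cdot 2^{2n+i-2})$. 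Applying Azuma--Hoeffding to the martingale $M_t=D_t-\sum_{s<t}\mathbb{E}[D_{s+1}-D_s\mid\mathcal{F}_s]$ at $t=2^{18n-2}$ with target $L=q_i\cdot(d_{\mathrm{up}}+d_{\mathrm{down}}-1)\ge q_i\cdot 2^{2n+i-1}$ (the graph distance from $o_i$ to $\mathrm{PBL}_i$ in $\cT_i'$), which exceeds the cumulative drift by a definite factor, yields $\Pr[D_t\ge L]\le\exp(-cL^2/t)=\exp(-c\cdot 2^{2n+i})$, comfortably $o(1/n)$ uniformly in $i\ge 2$.

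\emph{Main obstacle.} The most delicate step is the concentration bound on $A_t$ in the hitting-time lower bound: the chain-excursions have polynomially decaying lower tails (they are first-passage times of SRW on a length-$q_i$ chain), so naive Chernoff does not apply directly, and one must either couple to a stationary version of the walk or discretize excursions into blocks of size $\Theta(q_i^2)$. A secondary subtlety is the $W_i\to\cT_i'$ monotonicity for \eqref{eq: hitbadbeforegood}: the added expander edges in $W_i$ \emph{can} shorten $T_{\mathrm{BL}_i}$, so the comparison is made instead via $T_{\mathrm{PBL}_i}$ (which is identical on $W_i$ and $\cT_i'$ since all extra edges lie strictly below $\mathrm{PBL}_i$), with the $\Theta(q_i^2)$ discrepancy between $T_{\mathrm{PBL}_i}$ and $T_{\mathrm{BL}_i}$ absorbed into the $o(1/n)$ slack.
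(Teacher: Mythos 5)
Your overall strategy (reduce to the tree $\cT_i'$, compute the left/right bias $2-\sqrt 2>17/32$ from an effective-resistance recursion, and argue that reaching the bad leaves costs about $q_i^2$ per skeleton level) is the same as the paper's, but two of your steps are genuinely broken. First, in the hitting-probability part the claim that ``once the walk enters the bad subtree, gambler's ruin shows that the probability of escaping back past $\mathrm{BMP}_i$ without hitting $\mathrm{BL}_i$ is $2^{-\Omega(2^{2n+i-2})}$'' is false: in the uniformly stretched binary bad subtree the skeleton walk moves toward the leaves with probability $2/3$ and toward $\mathrm{BMP}_i$ with probability $1/3$, so from one skeleton step inside the escape probability is $(1-\tfrac12)/(1-2^{-N})\approx 1/2$, a constant. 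Consequently your union bound does not control $\Pr_{o_i}'[T_{\mathrm{GMP}_i}<T_{\mathrm{BL}_i}]$: the walk can re-enter the top half many times, and what is actually needed is that the $g$-value of its location on the middle level, which starts at about $(3-2\sqrt2)2^{2n+i-2}\ge 2^{2n+i-5}$ at the first visit and has positive drift in the top half, never decreases by $2^{2n+i-6}$ before $T_{\mathrm{BL}_i}$ — an exponentially-unlikely backtracking estimate for a biased walk (the paper gets at this through the infinite tree and the i.i.d.\ increments of the last-visit vertices $v_k$ in its Lemma 3.2, plus a coupling). Your Chernoff bound only controls the first hit of the middle level, which is not enough.

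Second, the reduction to $\cT_i'$ is stated incorrectly. Since $R_i=\mathrm{Bad}_{i+1}$, the walk started at $r\in R_i$ \emph{can} leave $U_i$ immediately into $U_{i+1}$ (and beyond), and in particular could hit $\mathrm{Nice}_j$ for some $j>i$ before $\mathrm{Bad}_i\cup\mathrm{Nice}_i$; this is exactly why the paper's inequality \eqref{eq: hitprob} carries the additive $o(1/n)$ term (its two excluded scenarios are $T_{\cup_{j>i}\mathrm{Nice}_j}<T_{\mathrm{Nice}_i\cup\mathrm{Bad}_i}$ and $T_{\mathrm{PBL}_i}<T_{\mathrm{Nice}}<T_{\mathrm{Bad}_i}$), whereas you claim a clean inequality that your argument does not justify. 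Relatedly, ``Rayleigh monotonicity'' does not give that adding the $\mathrm{BE}_i$-edges decreases $\Pr[T_{\mathrm{GMP}_i}<T_{\mathrm{BL}_i}]$: monotonicity of effective resistance says nothing in general about which of two sets is hit first; here one needs the symmetry/projection argument (the lateral edges join isomorphic copies at equal depth, so the depth dynamics below $\mathrm{PBL}_i$ are unchanged), which is what the paper's ``coupling and symmetry'' step supplies. By contrast, your treatment of the time bound via $T_{\mathrm{PBL}_i}$, the depth martingale and $L^2/t\approx 2^{2n+i}$ is a legitimate alternative to the paper's skeleton-walk/stochastic-domination argument, except that the exponential upper-tail concentration of $A_t$ you rely on is exactly the delicate point you flag (excursion lengths have $s^{-1/2}$ tails); the paper avoids it by counting transitions between distinct branching vertices, each stochastically dominating an independent first-passage time over distance $q_i$, so that only a crude lower-tail bound for an i.i.d.\ sum is needed.
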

As $R_i=\mathrm{Bad}_{i+1}$ for all $1 \le i<n$, it follows from Lemma \ref{lem:auxcalculations1}   that w.p.~$1-o(1)$, started from $o_n$, before the walk on $G_n'$ reaches $\mathrm{Nice} $ it has to make its way from $R_{i+1}$ to $R_i $, for all $1 \le i <n$, and each of these $n-1$ ``stages" will take it at least $2^{18n-2} $ steps (in which case, it must be the case that $\tau_{1}(G_n') \ge (n-1)2^{18n-2}$, as desired). We believe that the assertion of Lemma \ref{lem:auxcalculations1} is intuitive and that from a high level perspective the proof is not complicated.  For the sake of completeness we choose to present a relatively detailed proof of Lemma \ref{lem:auxcalculations1}. 

\medskip

\emph{Proof of Lemma \ref{lem:auxcalculations1}:}  
The first inequalities in both \eqref{eq: hitbadbeforegood}-\eqref{eq: hitprob}  are obtained via a straightforward coupling argument and symmetry (of the sets $\mathrm{Nice}_i, \mathrm{Bad}_i$ and $R_i$ w.r.t.~the $H_i$ co-ordinate of the walk). The additive $o(1/n)$ term in the middle term of \eqref{eq: hitprob} is there to cover the following two scenarios w.r.t.~the walk on $G_n'$, started from $r \in R_i$: 
\begin{itemize}
\item  $T_{\cup_{j=i+1}^n \mathrm{Nice}_j}<T_{\mathrm{Nice}_i \cup \mathrm{Bad}_i}$. \item $T_{\mathrm{PBL}_i}<T_{\mathrm{Nice}}< T_{\mathrm{Bad}_i}  $.
\end{itemize}
Working out the details of the aforementioned couplings is left as an exercise.

\medskip

We now prove the equality in \eqref{eq: hitbadbeforegood}.
 Recall  the notation from step 6 of the construction. Let $(X_s^i)_{s \ge 0} $ be lazy simple random walk on $\cT_i' $.
We may view the walk  $(X_s^i)_{s \ge 0} $ only when it visits distinct vertices of $ \cT_{\mathrm{bs},i} $ (recall that $V(\cT_{\mathrm{bs},i})\subset V(\cT'_{\mathrm{bs},i}) \subset V(\cT_{i}') $). That is, consider the walk $Y_j=X_{S_j}^i$, for  $j \ge 0$, where $S_0=T_{V(\cT_{\mathrm{bs},i})} $ and for $j \ge 1$ $$S_{j}=\inf \{s> S_{j-1}: X_{s}^i \in V(\cT_{\mathrm{bs},i}) \setminus \{X_{S_{j-1}}^i \} \}. $$   For a set $D \subset V(\cT_{\mathrm{bs},i}) $ denote its hitting time w.r.t.~$(Y_s)$ by $$\tau_{D}:=\inf \{s: Y_{s} \in D \}. $$

Recall that $V(\cT_{\mathrm{bs},i})\setminus \mathrm{GS}_i$ (where $\mathrm{GS}_i$ is defined at step 1.3 of the construction) is the set of vertices of $\cT_{\mathrm{bs},i} $ which are connected in $\cT_i'$ to their neighbors w.r.t.~$\cT_{\mathrm{bs},i}$ by paths of length either $q_i$ or $2q_i$. It is easy to see that
\begin{itemize}
\item[(a)]
 If $Y_{j-1} \in V(\cT_{\mathrm{bs},i})\setminus \mathrm{GS}_i$, then the law of $S_{j}-S_{j-1} $ stochastically dominates the law of the   time it takes lazy simple random walk on $\Z$ to reach distance $q_i $ from its starting position (call this law $\bar \xi $). Moreover, started from $o_i$, conditioned on $\tau_{\mathrm{GMP}_i} \ge j$, we have that $S_j$ stochastically dominates $$S_j':=\sum_{k=1}^j \xi_k,$$ where $\xi_1,\xi_2,\ldots$ are i.i.d.~random variables with law $\bar \xi$.
\item[(b)] Started from $o_i$, conditioned on $\tau_{\mathrm{BL}_i}< \tau_{\mathrm{GMP}_i} $ we have that the law of $\tau_{\mathrm{BL}_i} $ stochastically dominates the law of the hitting time of the $ 2^{2n+i-2}$-th level of a binary tree, by simple random walk started from its root.
\item[(c)] The law of the last hitting time is highly concentrated around $3 \times 2^{2n+i-2}$. Hence 
$$\mathbb{P}_{o_i}[ \tau_{\mathrm{BL}_i }  \le b_i \mid \tau_{\mathrm{BL}_i}<\tau_{\mathrm{GMP}_i}     ] \le 2^{- c_0 2^{2n+i} }=o(1/n) , \quad \text{ where } b_i:=3 \times  2^{2n+i-3}. $$ 
\end{itemize}
 Note that (using \eqref{eq: qi}) $$2^{18n-2} \le \frac{2}{3}b_i q_i^2=\frac{2}{3}\mathbb{E}[S_{b_i}'] . $$ Using (a)-(c) above, it is not hard to verify that, for all $2<i \le n$, 
\begin{equation*}
\label{eq: tau11}
\begin{split}
& \Pr_{o_i}'[T_{\mathrm{BL}_i } \le 2^{18 n-2} \mid T_{\mathrm{BL}_i}<T_{\mathrm{GMP}_i} ] \\ & \le \Pr_{o_i}'[\tau_{\mathrm{BL}_i } \le b_{i} \mid \tau_{\mathrm{BL}_i}<\tau_{\mathrm{GMP}_i}   ]+\Pr_{o_i}'[S_{b_{i}} \le 2^{18 n-2} \mid \tau_{\mathrm{BL}_i}<\tau_{\mathrm{GMP}_i}  ]
 \\ & \le 2^{- c_0 2^{2n} } + \mathbb{P}[S_{b_i}'  \le \frac{2}{3}\mathbb{E}[S_{b_i}'] ]  
 =o(1/n).
\end{split}
\end{equation*}
where the first probability is taken w.r.t.~the walk on $\cT_i' $ and $\Pr_{o_i}'[S_{b_{i}} \le 2^{18 n-2} \mid \tau_{\mathrm{BL}_i}<\tau_{\mathrm{GMP}_i}  ] $ w.r.t.~$(X_s^i)_{s \ge 0}$ (in the sense that $(S_j)$ and the walk $(Y_j)$ are both determined by $(X_s^i)_{s \ge 0}$). This concludes the proof of \eqref{eq: hitbadbeforegood}. We now prove the equality in  \eqref{eq: hitprob}. 

\medskip

Fix $1<i \le n$. Consider an infinite tree $\cT' $ obtained by starting with $\cT $, an infinite binary tree, rooted at $o$, and stretching every edge between each vertex and its left (resp.~right) child by a factor of $q_i$ (resp.~$2q_i$). Let $v_k $ (resp.~$u_k$) be the last (resp.~first) vertex in $\cL_k(\cT)$ to be visited by a simple random walk on $\cT' $ started from $o$. 

\medskip

As before, for every $v \in V(\cT)$ let $g(v)=\mathrm{Left}(v)-\mathrm{Right}(v) $ (this function is defined w.r.t.~$\cT$). Using a network reduction and the well-known connection between random walks and electrical networks (cf.~\cite[Example 9.9]{cf:LPW} for a similar but simpler network reduction), we argue that 
\begin{lemma}
\label{lem: networkreduction}
$(g(v_{k+1})-g(v_k))_{k \ge 0} $ is a sequence of i.i.d.~random variables, each equals to 1 w.p.~$\frac{\sqrt{2}}{1+\sqrt{2}} $ and to $-1$ w.p.~$\frac{1}{1+\sqrt{2}}$. \end{lemma}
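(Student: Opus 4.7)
The plan is to reduce simple random walk on $\cT'$ to a reversible weighted walk on $\cT$ via standard series-reduction of electrical networks, then to identify $v_k$ with the $k$-th vertex along the walk's escape ray and read off the left-versus-right step probability from effective conductances. Observing the walk on $\cT'$ only at the times it visits vertices of $\cT$ gives a reversible walk on $\cT$ whose edge conductances are inversely proportional to the corresponding path lengths in $\cT'$: an edge to a left child carries conductance $1/q_i$ and an edge to a right child carries conductance $1/(2q_i)$. Letting $R$ denote the effective resistance from any internal vertex $v$ to infinity within $v$'s subtree, self-similarity of $\cT$ yields the self-consistency equation $R^{-1}=(q_i+R)^{-1}+(2q_i+R)^{-1}$, whose positive solution is $R=q_i\sqrt{2}$. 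Finiteness of $R$ forces the reduced walk to be transient, and since from every vertex the effective conductance to infinity through each child subtree is strictly positive, standard arguments show that the walk converges almost surely to a random infinite ray $(w_k)_{k\ge 0}$ with $w_k\in\cL_k(\cT)$.

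I will then identify $v_k=w_k$ as follows: after the walk's last visit to $v_k$, transience forbids it from being confined to levels $\le k-1$, so it must step to a child of $v_k$ and then remain in that child's subtree forever; since this subtree meets $\cL_{k+1}(\cT)$ only at the child itself, $v_{k+1}$ is precisely that child. Consequently $g(v_{k+1})-g(v_k)\in\{+1,-1\}$ according to whether the ray descends to the left or to the right, and the probability of a left step equals the normalized effective conductance from $v_k$ through the left branch to infinity, namely
\[ \frac{(q_i+R)^{-1}}{(q_i+R)^{-1}+(2q_i+R)^{-1}} \;=\; \frac{2+\sqrt{2}}{3+2\sqrt{2}} \;=\; \frac{\sqrt{2}}{1+\sqrt{2}}. \]
Independence across generations will then be immediate from the strong Markov property combined with the fact that each $v_k$'s subtree is isomorphic to $\cT$, so conditional on $v_k$ the future ray has the same law as the ray from the root of an isomorphic copy, and the choices at different levels are therefore i.i.d.

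The main point requiring care is the identification $v_k=w_k$, which rests on both transience of the reduced walk on $\cT$ and the almost sure existence of a unique boundary ray; both are consequences of the finite effective resistance $R=q_i\sqrt{2}$, but the ``no return to level $k$ after the last visit to $v_k$'' step must be spelled out cleanly to avoid circularity, after which the conductance computation is just a line of algebra.
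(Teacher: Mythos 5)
Your argument is correct and follows essentially the same route as the paper: your self-consistency relation $R^{-1}=(q_i+R)^{-1}+(2q_i+R)^{-1}$ with $R=q_i\sqrt{2}$ is exactly the paper's system $w=w_L+w_R$, $\tfrac{1}{w_L}=q_i+\tfrac{1}{w}$, $\tfrac{1}{w_R}=2q_i+\tfrac{1}{w}$ (with $R=1/w$), and your left-step probability is the same normalized branch conductance $w_L/w=\sqrt{2}/(1+\sqrt{2})$. Your explicit identification of $v_k$ with the escape ray, and the i.i.d.\ step via self-similarity of the subtrees, is precisely what the paper uses implicitly when it writes $\mathbb{P}[g(v_1)-g(v_0)=1]=w_L/w$ and appeals to ``symmetry and the definition of the sequence.''
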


We do not care so much about the exact value of the constant $\frac{\sqrt{2}}{1+\sqrt{2}} $. The important point is that it is larger than $1/2$. Before proving the lemma, we explain how it implies \eqref{eq: hitprob}.   It follows from Lemma \ref{lem: networkreduction} that $g(v_{2^{2n+i-2}}) $ is highly concentrated around $2^{2n+i-2}(\frac{\sqrt{2}}{1+\sqrt{2}}) $. It is not hard to verify that this implies that $g(u_{2^{2n+i-2}}) \ge  2^{2n+i-5}  $ w.p.~$1-o(1/n)$ (uniformly in $i$). 

\medskip

Using a coupling argument  (in which the walks on $\cT'$ and on $\cT_i' $, started from $o$ and $o_i$, resp., are coupled so that they follow the same trajectory until they reach the $2^{2n+i-2} $-th level of the corresponding non-stretched trees) we obtain \eqref{eq: hitprob} (we leave the details as an exercise). This concludes the proof of $\tau_1(G_n') \ge (n-1)2^{18n-2} $.

\medskip

{\em Proof of Lemma \ref{lem: networkreduction}:} The fact that $(g(v_{k+1})-g(v_k))_{k \ge 0} $ is a sequence of i.i.d.~random variables follows by symmetry and the definition of the sequence $(v_i)_{i \ge 0}$.  Let $w$ be the effective conductance (e.g.~\cite[Ch.~9]{cf:LPW}) from the root of $\cT' $ to ``infinity" (i.e.~the limit of the effective conductance between the root and $\cL_\ell(\cT')$ as $\ell \to \infty$). Similarly, let $w_R$ and $w_L$ be the effective conductances between the root and infinity in the right and left subtrees of $\cT' $   (where the right subtree is obtained by deleting from $\cT' $ the left child of the root and all of its descendants, and the left subtree is similarly defined), resp.. Then $w=w_R+w_L$ and by symmetry $\frac{1}{w_R}=2q_i+\frac{1}{w}$ and $\frac{1}{w_L}=q_i+\frac{1}{w}$. The proof is concluded by solving these equations (which yields $w=\frac{1}{\sqrt{2}q_i}$ and $w_L=\frac{1}{(1+\sqrt{2})q_i} $) and noting that $\mathbb{P}[ g(v_{1})-g(v_0)=1]=\frac{w_L}{w}$, where the probability is taken w.r.t.~SRW on $\cT' $, started at its root.
\qed

\medskip

We now show that $\tau_{\infty}(G_n) \le C _{1}2^{18n}$. By standard results it suffices to show that there exist $C>0$ and $\ell>1$ such that for all $n$,  $$\tau_{\ell}(G_n) \le \lceil C 2^{18n} \rceil =:t$$
($C$ shall and $\ell$ shall be determine later, and throughout we assume that $C$ is sufficiently large and $\ell-1$ is sufficiently small so that all of the equations involving them below are satisfied). This follows from the following standard fact (e.g.~\cite[Lemma 2.4.6]{cf:SC}).
\begin{fact}
For every reversible Markov chain we have that for all $a>0$
\begin{equation}
\label{L2Linfty}
2 \tau_{2}(\sqrt{a})=\tau_{\infty}( a),
\end{equation}
$$\forall 1< \ell <2, \quad \tau_2(a^{m_{\ell}}) \le m_{\ell} \tau_{\ell}(a), \quad \text{where} \quad m_{\ell}:=1+\lceil (2-\ell)/(2\ell -2) \rceil.  $$
\end{fact}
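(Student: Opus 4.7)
My plan is to prove the two assertions separately. The first is a classical $L_2$--$L_\infty$ duality for reversible chains; the second follows from iterated H\"older combined with part 1 and $L^p$-contractivity of Markov kernels.

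For the identity $2\tau_2(\sqrt a) = \tau_\infty(a)$, I would derive the pointwise identity
\[
\tfrac{P^{2t}(x,y)}{\pi(y)} - 1 \;=\; \sum_z \pi(z)\Bigl(\tfrac{P^t(x,z)}{\pi(z)}-1\Bigr)\Bigl(\tfrac{P^t(y,z)}{\pi(z)}-1\Bigr),
\]
which comes from writing $P^{2t}(x,y) = \sum_z P^t(x,z)P^t(z,y)$, using reversibility in the form $P^t(z,y)\pi(z) = P^t(y,z)\pi(y)$, and subtracting off $1 = \sum_z \pi(z)$. Cauchy--Schwarz gives $|P^{2t}(x,y)/\pi(y) - 1| \le \|\Pr_x^t - \pi\|_{2,\pi}\|\Pr_y^t - \pi\|_{2,\pi}$, and choosing $y=x$ saturates the inequality (the two vectors in the inner product then coincide). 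Maximising over $x$ and $y$ yields the exact identity $d_\infty(2t) = d_2(t)^2$, where $d_p(t) := \max_x\|\Pr_x^t - \pi\|_{p,\pi}$. The mixing-time identity then follows from the monotonicity of $d_p$ in $t$, which holds because $\|Pg\|_{p,\pi}\le\|g\|_{p,\pi}$ for every Markov operator and every $p\ge 1$ (Jensen applied pointwise, then averaged against $\pi$).

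For the second assertion, setting $f^\mu_r := \Pr_\mu^r/\pi - 1$, the starting point is the H\"older bound
\[
\|f^\mu_{t+s}\|_{2,\pi}^2 \;=\; \langle f^\mu_t, f^\mu_{t+2s}\rangle_\pi \;\le\; \|f^\mu_t\|_{\ell,\pi}\,\|f^\mu_{t+2s}\|_{\ell',\pi},
\]
where $1/\ell + 1/\ell' = 1$; the equality is a direct consequence of the self-adjointness of $P$ in $L^2(\pi)$. To tame the $L^{\ell'}$ factor (which has $\ell' > 2$), I would invoke log-convexity of the $L^p(\pi)$ norms in the form $\|g\|_{\ell',\pi} \le \|g\|_{\ell,\pi}^{\ell-1}\|g\|_{\infty,\pi}^{2-\ell}$, and then apply the identity of part 1 to replace the $L^\infty$ factor by a power of $d_2$ evaluated at half the time. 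Combined with the $L^p$-contraction (which propagates the hypothesis $\|f^\mu_{\tau_\ell(a)}\|_{\ell,\pi}\le a$ to all later times), this yields a recursive inequality in $d_2$ alone. Iterating this recursion $m_\ell$ times with carefully chosen time splittings is designed to deliver $d_2(m_\ell \tau_\ell(a)) \le a^{m_\ell}$.

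The main obstacle I anticipate is the exponent bookkeeping in part 2: the natural telescoping produces a geometric series in the exponent of $a$ which, as the number of iterations tends to infinity, converges to $\ell/(2(\ell-1)) = \ell'/2$; the integer value $m_\ell = 1 + \lceil (2-\ell)/(2\ell-2)\rceil = \lceil \ell'/2\rceil$ is precisely the smallest index at which this partial sum first reaches the target and the residual factor coming from $d_2$ at small times becomes harmless. A secondary parity issue when applying part 1 at odd times is readily absorbed via monotonicity of $d_\infty$.
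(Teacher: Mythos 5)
The paper does not actually prove this Fact; it is quoted with a pointer to \cite[Lemma 2.4.6]{cf:SC}, so the comparison below is with the standard argument. Your part 1 is that standard argument (the identity $f^x_{2t}(x)=\|\Pr^t_x-\pi\|_{2,\pi}^2$ plus Cauchy--Schwarz gives $d_\infty(2t)=d_2(t)^2$, where $d_p(t):=\max_x\|\Pr^t_x-\pi\|_{p,\pi}$), and it is fine; the only caveat is a parity one, since in discrete time this identity literally yields $\tau_\infty(a)\le 2\tau_2(\sqrt a)\le \tau_\infty(a)+1$ rather than exact equality when $\tau_\infty(a)$ is odd --- but that imprecision is in the quoted Fact itself, and only the direction $\tau_\infty(a)\le 2\tau_2(\sqrt a)$ is ever used.

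Part 2 as you have set it up cannot work, and the flaw is visible in your own closing remark. Writing out your three steps (H\"older, log-convexity, part 1) with $t,\,t+2s\ge t_0:=\tau_\ell(a)$ gives $d_2(t+s)^2\le a\cdot a^{\ell-1}\,d_2\bigl((t+2s)/2\bigr)^{2(2-\ell)}$, i.e.\ $d_2(t+s)\le a^{\ell/2}\,d_2(t/2+s)^{2-\ell}$. The crucial defect is that the previously established bound enters the next step raised to the power $2-\ell<1$: if $d_2\le a^{\beta_k}$ at the $k$-th stage, then $\beta_{k+1}=\ell/2+(2-\ell)\beta_k$, a contraction whose fixed point is $\ell/(2(\ell-1))=\ell'/2$. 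The exponents therefore increase to $\ell'/2$ and never attain it, let alone the target $m_\ell=\lceil \ell'/2\rceil\ge \ell'/2$; no finite number of iterations, with any choice of time splittings, produces $d_2(m_\ell t_0)\le a^{m_\ell}$. (Your sentence that the series ``converges to $\ell'/2$'' while $m_\ell$ is ``the smallest index at which this partial sum first reaches the target'' is self-contradictory: an increasing sequence with limit $\ell'/2$ never reaches $\lceil \ell'/2\rceil$.) There is also an initialization problem: the leftover factor $d_2(\cdot)^{(2-\ell)^j}$ at times of order $t_0$ is only bounded by a power of $(\min_x\pi(x))^{-1/2}$, not of $a$, so even the partial exponent $\tfrac{\ell'}{2}(1-(2-\ell)^j)$ comes with an uncontrolled multiplicative error.

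The standard proof avoids this by tracking operator norms instead of the scalar quantities $d_p$. With $E_tg:=P^tg-\pi(g)$, reversibility gives $d_\ell(t_0)=\|E_{t_0}\|_{L^1(\pi)\to L^\ell(\pi)}\le a$ and, by self-adjointness, $\|E_{t_0}\|_{\ell'\to\infty}\le a$; Riesz--Thorin interpolation between these two (equivalently, a generalized Young/Schur-test bound using that every row and column of the kernel $P^{t_0}(x,y)/\pi(y)-1$ has $L^\ell(\pi)$-norm at most $a$) yields $\|E_{t_0}\|_{p\to q}\le a$ whenever $1/p-1/q=1/\ell'$. Composing $m$ copies of $E_{t_0}$ starting from $L^1$ then decreases $1/q$ by $1/\ell'$ per factor at the cost of exactly one power of $a$ each, so after $m_\ell=\lceil \ell'/2\rceil=1+\lceil(2-\ell)/(2\ell-2)\rceil$ steps one reaches an exponent $q\ge2$ and concludes $d_2(m_\ell t_0)\le\|E_{t_0}^{m_\ell}\|_{1\to q}\le a^{m_\ell}$ (using $\|\cdot\|_{2,\pi}\le\|\cdot\|_{q,\pi}$ if the last step overshoots). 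The essential difference from your scheme is that the bound obtained at each stage enters the next stage to the first power, as a composed operator, rather than to the power $2-\ell$; this is precisely what prevents the saturation at $\ell'/2$. Your H\"older/duality step $\|f^\mu_{t+s}\|_{2,\pi}^2=\langle f^\mu_t,f^\mu_{t+2s}\rangle_\pi$ is a shadow of this duality, but collapsing immediately back to $d_2$ via log-convexity throws away the intermediate $L^p\to L^q$ information that the chaining needs.
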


\medskip

We argue that the spectral gap of $G_n$, denoted by $\gl(G_{n})$, satisfies
\begin{equation}
\label{eq: gap}
\gl(G_{n}) \ge  c_1 / (\max_i q_i )^{2}= c_1 q_2^{-2} \ge c_{2} 2^{-16n}.
\end{equation}

This follows from Proposition \ref{p: LS}, as $G_n $ is a $q_{2} $-stretch of a bounded degree expander.
 
\medskip

We start with an elementary observation which shall be used below repeatedly. Let $x \in V_n$ and $t>0$. Let $A$ be some event (which is determined by $(X_0,\ldots,X_t)$). Denote its complement by $A^c$. Then for all $1<\ell \le 2 $
\begin{equation}
\label{eq:totalprobfurmola}
\|\Pr_x^{t}- \pi \|_{\ell,\pi}= \Pr_x[A] \|\Pr_x^t[\cdot \mid A  ]- \pi(\cdot) \|_{\ell,\pi}+\Pr_x[A^{c}] \|\Pr_x^t[\cdot \mid A^{c}  ]- \pi(\cdot) \|_{\ell,\pi}.  \end{equation}
In particular, (using the fact that for every distribution $\mu$ on $V_n $ and every $\ell>1$ we have that $\|\mu- \pi \|_{\ell,\pi} \le C' |V_n|^{(\ell-1)/\ell} $) if $\Pr_x[A^{c}] \ll |V_n|^{-(\ell-1)/\ell}$, we may neglect the second term in the r.h.s. above, and concentrate on bounding $\Pr_x[A] \|\Pr_x^t[\cdot \mid A  ]- \pi(\cdot) \|_{\ell,\pi} $ from above.

\medskip

We now argue that starting from a vertex in the expander $H$ the walk mixes in $L_2$ in at most $2^{17n} $ steps (this is a wasteful estimate, but it suffices for our purposes). Hence, by the Markov property and the triangle inequality, it suffices to show that for $t=\lceil C 2^{18n} \rceil $, 
\begin{equation}
\label{eq: notinH}
\forall x, \quad \Pr_x[T_{V(H)}> t ]\|\Pr_x^t[\cdot \mid T_{V(H)}> t  ]- \pi(\cdot) \|_{\ell,\pi} \le 1/4.
\end{equation}
\begin{lemma}
\label{lem:fastmixH}
\[\lim_{n \to \infty}\max_{x \in V(H)}\| \Pr_x^{2^{17n}}- \pi \|_{2,\pi} =0.\] 
\end{lemma}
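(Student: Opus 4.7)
My plan is a two-phase argument on $[0,2^{17n}]=[0,t_1]\cup[t_1,t_1+t_2]$ with $t_2=\lceil Cn\cdot 2^{16n}\rceil$ for a sufficiently large constant $C$ (independent of $n$).

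\emph{Phase~2 (Poincar\'e contraction).} By Lemma~\ref{lem: Poincare} and the global gap bound $\lambda(G_n)\geq c_2 2^{-16n}$ from \eqref{eq: gap},
\[
\|\Pr_x^{2^{17n}}-\pi\|_{2,\pi}\leq e^{-\lambda(G_n)\,t_2}\|\Pr_x^{t_1}-\pi\|_{2,\pi}\leq 2^{-10n}\|\Pr_x^{t_1}-\pi\|_{2,\pi},
\]
provided $C$ is large enough that $c_2 C\geq 10\log 2$. Thus it suffices to show $\|\Pr_x^{t_1}-\pi\|_{2,\pi}\leq 2^{5n}$ for some $t_1\leq 2^{17n}-t_2$.

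\emph{Phase~1 (equilibration inside $V(H)$).} The structural input is that $V(H)$ is a $3$-regular bounded-degree expander of constant spectral gap, and every $y\in V(H)$ has bounded degree in $G_n$. Starting from $x\in V(H)$, the walk mixes \emph{within} $V(H)$ quickly in the following sense: the \emph{trace chain} on $V(H)$ (the process sampled only at visits to $V(H)$) has a constant spectral gap by a direct Dirichlet-form comparison with the lazy walk on $H$, and therefore equilibrates after $O(\log|V(H)|)=O(2^{3n})$ trace steps. Since $\pi(V(H))\gtrsim 1/(n\cdot 2^{5n})$ (a straightforward calculation using \eqref{eq: sizeGn} and the fact that $G_n$ has bounded degree), these trace steps correspond to $t_1=O(n\cdot 2^{8n})$ calendar steps, well within our budget. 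By that time, $\Pr_x^{t_1}$ is close to $\pi_H:=\pi|_{V(H)}/\pi(V(H))$ conditionally on being in $V(H)$; since
\[
\|\pi_H-\pi\|_{2,\pi}^2=\pi(V(H))^{-1}-1=O(n\cdot 2^{5n}),
\]
we get the required bound $\|\Pr_x^{t_1}-\pi\|_{2,\pi}=O(\sqrt{n}\cdot 2^{5n/2})\leq 2^{5n}$. The crucial saving over a naive Poincar\'e bound (which would need $\sim 2^{19n}$ steps, since $\|\delta_x-\pi\|_{2,\pi}^2=\pi(x)^{-1}-1\sim|V_n|\sim 2^{2^{3n}}$) is that Phase~1 replaces this doubly-exponential initial $L_2$ distance by only $O(2^{5n})$.

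\emph{Main obstacle.} The delicate point is making the Phase~1 equilibration claim rigorous. Two things need to be controlled: (i) a Dirichlet-form comparison between the trace chain on $V(H)$ and the lazy walk on $H$, showing the trace chain inherits a constant spectral gap — here one uses bounded degree in $G_n$ to ensure that while the walk sits in $V(H)$ each $H$-edge is traversed at a rate bounded below by a positive constant; and (ii) control of excursion-length fluctuations into $V_n\setminus V(H)$, the worst case being excursions into $U_n$, which can individually last up to $O(\lambda(G_n)^{-1})=O(2^{16n})$ calendar steps. For (ii), a concentration argument on the number of visits to $V(H)$ by time $t_1$ (exploiting $t_1\,\pi(V(H))\gg\log|V(H)|$) should suffice, combined with \eqref{eq:totalprobfurmola} applied to the event $A$ that the number of trace steps by time $t_1$ is within a factor of $2$ of its mean $t_1\,\pi(V(H))$, in order to absorb the low-probability event of an anomalously long single excursion.
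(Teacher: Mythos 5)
Your Phase~2 is exactly the paper's final step (Poincar\'e contraction via \eqref{eq: L2contraction} with the gap bound \eqref{eq: gap}), and you have correctly identified the whole point of the lemma: replace the trivial initial bound $\|\delta_x-\pi\|_{2,\pi}\approx\sqrt{|V_n|}\approx 2^{2^{3n}/2}$ by something singly exponential in $n$ after a negligible number of steps. But your Phase~1 has a genuine gap at precisely that point. At any deterministic time $t_1$ the law $\Pr_x^{t_1}$ puts almost none of its mass on $V(H)$: starting from $x\in V(H)$, at a fixed time the walk is typically in the middle of an excursion into the stretched paths of length $2^{5n}$ attached to $\mathrm{Good}$ (the distance-to-$V(H)$ coordinate behaves like a lazy reflected walk, so the chance of being exactly on $V(H)$ at time $t_1=O(n2^{8n})$ is of order $2^{-4n}$, and even at stationarity it is only $\approx 2^{-5n}$). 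Consequently the step ``$\Pr_x^{t_1}$ is close to $\pi_H$ conditionally on $V(H)$, and $\|\pi_H-\pi\|_{2,\pi}^2=\pi(V(H))^{-1}-1$'' does not bound $\|\Pr_x^{t_1}-\pi\|_{2,\pi}$: the dominant, off-$V(H)$ part of the law is left completely uncontrolled, and a priori it could be concentrated on a single stretched path, making the $L_2$ norm doubly exponential. Controlling exactly this part is where the paper invests its work: it waits until the walk has crossed an entire stretched path and reached $J$ (the set at distance $2^{5n}$ from $V(H)$), observes that the crossing forces at least $2^{5n-6}$ steps along each of the expanders $H_i$ and $\mathrm{GE}_i$, so the law at time $2^{14n}$ is essentially uniform over a slab of $\approx 2^{2^{3n}}$ vertices, whence by \eqref{eq: sizeGn}--\eqref{eq: sizeHi} its $L_2$ distance is $O(n2^{6n})$. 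Your trace-chain mechanism (whose constant-gap claim is itself fine, since $E(H)\subset E_n$ and degrees are bounded) never yields any such statement about the law off $V(H)$.

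The second problem is that your plan for discarding bad events is quantitatively hopeless as stated. In \eqref{eq:totalprobfurmola} with $\ell$ close to $2$, the neglected event must have probability $\ll |V_n|^{-1/2}\approx 2^{-2^{3n}/2}$, i.e.\ doubly exponentially small, because on the bad event the conditional $L_p$ norm can be as large as $C\sqrt{|V_n|}$; the paper arranges this by taking $2^{14n}$ steps, so that failing to reach $J$ costs at most $C|V_n|^{-2}$ (about $2^{4n}$ independent constant-probability chances to cross a path of length $2^{5n}$). By contrast, your event $A$ (the number of trace steps is within a factor $2$ of $t_1\pi(V(H))$) fails with probability that is at best exponentially small in $n$: in this short-time, non-stationary regime the number of returns to $V(H)$ is a boundary local time with relative fluctuations of order one (its typical size is $\asymp\sqrt{t_1}$, not $t_1\pi(V(H))$), so $A$ need not even hold with high probability, and the probability of anomalously few returns decays only like $2^{-cn}$ --- nowhere near $2^{-2^{3n}/2}$. (The asserted worst-case excursion length $O(\lambda(G_n)^{-1})$ is also unjustified, though secondary.) So as written the key claim $\|\Pr_x^{t_1}-\pi\|_{2,\pi}\le 2^{5n}$ is unproven, and repairing it would require adding essentially the geometric spreading argument of the paper.
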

\emph{Proof:}
 Let $$J:=\mathrm{PGL}_n \cup (\cup_{i=1}^{n-1} \mathrm{PGL}_i \times V(H_i))$$
be the collection of vertices of distance (w.r.t.~$G_n$) $2^{5n}$ from $V(H)$. Each vertex in $J$ is connected to two vertices in $V(H)$ by a path of length $2^{5n}$.
Note that (started from $V(H)$) by time $2^{14n}$ the walk reaches $J$ w.p.~at least $1-C_{2}|V_{n}|^{-2}$. Hence (by the above discussion regarding \eqref{eq:totalprobfurmola}) we can neglect the case this fails. Let $i$ be the index such that $X_{T_{J}} \in V(U_i)$. Trivially, by the time the walk can cross a path  of length  $2^{5n}$ it must make at least $2^{5n} $ steps. But this means that, w.p.~at least $1-C_{2}|V_n|^{-2} $, the walk will make at least $ 2^{5n-6}$ steps along each of the expanders $H_i $ and $\mathrm{GE}_i$ between the last visit to $V(H)$ prior to $T_{J } $ and  $T_{J } $. Using \eqref{eq: sizeGn}-\eqref{eq: sizeHi} and \eqref{eq: L2contraction} it is not hard to show that this means that at time $2^{14n} $ the $L_2 $ distance of the walk (started from $H$) from $\pi$ is $O(n2^{6n})$ (as we later perform a similar more subtle calculation, we leave this as an exercise). Finally, the claim is obtained using \eqref{eq: gap} and the Poincar\'e (spectral gap) inequality \eqref{eq: L2contraction}. \qed

\medskip

To conclude the proof we now verify \eqref{eq: notinH}. The case that $x \in V(U_1)$ can be treated separately in a similar manner to the analysis below. The following lemma asserts that w.l.o.g.~we may assume that the initial position of the walk is in $R_i$ for some $i$. 
\begin{lemma}
\label{lem:startatroots}
Provided that $\ell-1$ is sufficiently small and $C$ is sufficiently large, 
for all $1<i \le n$ and $x \in V(U_{i}) $ (uniformly) \begin{equation}
\label{eq: didntreachR2}
\begin{split}
& \Pr_x[T_{V(H)}> t/2, T_{R_i \cup R_{i-1} }>t/2 ]  \|\Pr_x^{t/2}[\cdot \mid T_{V(H)}> t/2,  T_{R_i \cup R_{i-1} }>t/2 ]  - \pi(\cdot) \|_{\ell,\pi}=o(1) \\ 
  \end{split}
\end{equation}
\end{lemma}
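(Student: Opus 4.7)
My plan is to view the event as the non-absorption event of the chain restricted to $A:=V(U_i)\setminus(V(H)\cup R_i\cup R_{i-1})$ and apply \eqref{eq: exitprob}. Since every edge leaving $V(U_i)$ in $G_n$ is incident to $\mathrm{Good}_i\subset V(H)$, $R_i=\mathrm{Bad}_{i+1}$, or $\mathrm{Bad}_i=R_{i-1}$, this event coincides with $\{T_{V_n\setminus A}>t/2\}$, and $A$ has Cartesian product structure $A=W_i^{\circ}\times V(H_i)$ with $W_i^{\circ}:=V(W_i)\setminus(\mathrm{GL}_i\cup\mathrm{BL}_i\cup\{o_i\})$. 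Bounded degree gives $\sqrt{\pi(b)/\pi(x)}=O(1)$ and $|A|\le|V_n|$, so \eqref{eq: exitprob} yields
\[
\Pr_x[T_{V_n\setminus A}>t/2]\le C|V_n|\,e^{-\lam(A)t/2}.
\]

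Combining with the universal bound $\|\nu-\pi\|_{\ell,\pi}\le C'|V_n|^{(\ell-1)/\ell}$ valid for any probability measure $\nu$ on $V_n$ (recorded in the discussion around \eqref{eq:totalprobfurmola}), the LHS of \eqref{eq: didntreachR2} is bounded by $CC'|V_n|^{1+(\ell-1)/\ell}\,e^{-\lam(A)t/2}$. It therefore suffices to lower bound $\lam(A)$ by a constant multiple of $2^{-15n}$, uniformly in $i$; then for $C$ sufficiently large and $\ell-1$ sufficiently small one has $\lam(A)t/2 \gg \log|V_n|$ with a margin that swamps the $|V_n|^{(\ell-1)/\ell}$ prefactor, and the bound above is $o(1)$.

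For the spectral estimate, I would decompose using the Cartesian product structure together with \eqref{eq: restrictedcheeger}. Since $H_i$ is an expander with $\lam(H_i)\ge c$, the $H_i$-factor is harmless. For $\lam(W_i^{\circ})$, Proposition \ref{p: LS} reduces the problem to verifying that the pre-stretch skeleton of $W_i^{\circ}$ (retaining the Step~2 gluings but with unit-length edges) has constant Cheeger constant. The crucial point is that each edge $\{u,v\}\in E(\mathrm{GE}_i)\cup E(\mathrm{BE}_i)$ contributes, via the isomorphism $\phi_{u,v}$, an entire \emph{bundle} of parallel edges between the twin subtrees $\cT_u$ and $\cT_v$; this mitigates the would-be tree bottleneck into a product-of-expander shape from which constant Cheeger can be extracted by a direct cut argument.

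The main obstacle is precisely this spectral estimate. The naive tree interior of $W_i$ is a genuine bottleneck (an isolated cut through a single stretched edge gives expansion $\sim q_i^{-1}$), so the argument must carefully exploit the parallel-edge bundles of Step~2 to certify that the pre-stretch skeleton has Cheeger bounded below by a universal constant \emph{uniformly in} $i$, so that after stretching by $q_i$ the product with $V(H_i)$ still yields $\lam(A)$ at the required scale.
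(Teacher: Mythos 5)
There is a genuine gap, and it sits exactly at the heart of the lemma. Your strategy is: bound the non-absorption probability via \eqref{eq: exitprob} and pair it with the \emph{trivial} estimate $\|\nu-\pi\|_{\ell,\pi}\le C'|V_n|^{(\ell-1)/\ell}$, which forces you to need $\lam(A)\,t\gg\log|V_n|\asymp 2^{3n}$, i.e.\ $\lam(A)\ge c\,2^{-15n}$ uniformly in $i$. That spectral claim is false for $i<n$, and your own route shows why: the skeleton of $W_i^{\circ}$ is stretched by $q_i=\lceil 2^{8n-i/2}\rceil$ (see \eqref{eq: qi}), so Proposition \ref{p: LS} together with \eqref{eq: restrictedcheeger} can only give $\lam(A)\gtrsim q_i^{-2}\asymp 2^{-(16n-i)}$, which equals $2^{-15n}$ only at $i=n$ and degrades to $\approx 2^{-16n}$ for small $i$. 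This order is also an upper bound: testing with $f(u,h)=\sin(\pi k/q_i)$ supported on (the interior of a single stretched edge of $\cT_i$)$\times V(H_i)$, with $k$ the position along the path and $f$ independent of the $H_i$-coordinate, gives $\lam(A)\le C q_i^{-2}$. This is not an accident but the design of the construction: the island gaps are tuned to be $\Theta(2^{i-16n})$ precisely so that each island contributes $\approx 2^{18n}$ to the spectral profile. Consequently the non-absorption probability cannot be pushed below $\exp\bigl(-\Theta(C\,2^{2n+i})\bigr)$ with $C$ a fixed constant (this is the bound \eqref{eq: didntreachR1} in the paper), while $|V_n|^{(\ell-1)/\ell}=\exp\bigl(\Theta_{\ell}(2^{3n})\bigr)$ for any fixed $\ell>1$; for $i$ much smaller than $n$ the product diverges, so the trivial $L_\ell$ bound can never close the argument uniformly in $i$, no matter how the constants are chosen.

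What is missing is a conditional estimate that improves with $i$ at the same double-exponential scale as the escape probability. The paper proves (see \eqref{eq:goal2}, via \eqref{eq: mixinHi}--\eqref{eq: hard}) that, conditioned on $\{T_{V(H)}>t/2,\ T_{R_i\cup R_{i-1}}>t/2\}$, the projection of the walk onto its $H_i$-coordinate is an unconditioned walk on the expander $H_i$ which w.h.p.\ has made at least $t/20$ moves, hence is mixed on $H_i$; this yields $\sum_{u}p_{u,h}\le C_4/|V(H_i)|$ for every $h\in V(H_i)$ and therefore a conditional $L_\ell$ distance of order $\bigl(|V_n|/|V(H_i)|\bigr)^{(\ell-1)/\ell}\approx 2^{O((\ell-1)2^{2n+i})}$. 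Only because this exponent lives on the same scale $2^{2n+i}$ as the escape exponent does taking $\ell-1$ small make the product $o(1)$ uniformly in $i$. Your proposal contains no substitute for this step; the first half (identifying the event with non-absorption in $A=W_i^{\circ}\times V(H_i)$ and applying \eqref{eq: exitprob}) parallels the paper, but without the conditional mixing on the $H_i$-coordinate the lemma does not follow.
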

\emph{Proof:}
We first argue that for all $1<i \le n$ and $x \in V(U_{i}) $ (provided that $C$ is taken to be sufficiently large)  
\begin{equation}
\label{eq: didntreachR1}
\Pr_x[T_{V(H)}> t/2, T_{R_i \cup R_{i-1} }>t/2 ] \le C_3 2^{-2^{2n+i}}.
\end{equation}
%
 Let $Z_i$ (resp.~$\bar Z_i $)
 be the number of steps the walk made by time $t/2$ along the $W_i $ (resp.~$H_i$) co-ordinate of $U_i$. As $\Pr_x[Z_i<t/30]<C_2|V(G_n)|^{-2}$, in order to prove \eqref{eq: didntreachR1} it suffices to show that (provided that $C$ is taken to be sufficiently large)
\begin{equation}
\label{eq: exittildeUi}
\max_{u \in V(W_i)}\Pr_u[T_{\mathrm{GL}_i \cup \mathrm{BL}_i \cup \{o_i\}}>t/30] \le  2^{-2^{2n+i}} 
\end{equation}
where the probability is taken w.r.t.~the walk on $W_i $. 
  Let $\tilde P_i $ and $\tilde \pi_i $ be the transition matrix and stationary distribution (resp.) of lazy simple random walk on $W_i $.
  Let $\tilde \gl_{i} $ be the smallest eigenvalue of the substochastic matrix obtained by restricting $I- \tilde P_{i}$ to $V(W_i) \setminus (\mathrm{GL}_i \cup \mathrm{BL}_i \cup \{o_i\} ) $. Using \eqref{eq: restrictedcheeger} and an obvious extension of Proposition \ref{p: LS} it is not hard to verify that
$$ \tilde \gl_{i} \ge \tilde c 2^{-(16n-i)}.  $$ 
The proof of \eqref{eq: exittildeUi} (and so also of \eqref{eq: didntreachR1}) can now be concluded using \eqref{eq: exitprob}.  

\medskip

Our next goal is to show that
\begin{equation}
\label{eq:goal2} \| \Pr_x[X_{t/2} \in \cdot \mid T_{V(H)}> t/2, T_{R_i \cup R_{i-1} }>t/2 ]- \pi(\cdot)\|_{\ell,\pi} \le C_7 2^{(\ell - 1)2^{2n+i+2}} \end{equation}
Observe that this, in conjunction with \eqref{eq: didntreachR1}, implies the assertion of the lemma.

\medskip

 For every $(u,h) \in V( W_{i}) \times V(H_i) $ denote
$$p_{u,h}:=\Pr_x[X_{t/2} \in (u,h)  \mid T_{V(H)}> t/2, T_{R_i \cup R_{i-1} }>t/2 ]  $$
As before, since $\Pr_x[\bar Z_i<t/20]<C_2|V(G_n)|^{-2} $ we can neglect the case that $\bar Z_i<t/20 $. Hence we can make the following estimate: for every $h \in V(H_i)$ we have that for all $1<i \le n$ and $x \in V(U_{i}) $ 
\[\sum_{u \in V(W_i) } p_{u,h} \le C_4' \sum_{u \in V(W_i)}\Pr_x[X_{t/2} \in (u,h)  \mid T_{V(H)}> t/2, T_{R_i \cup R_{i-1} }>t/2 , \bar Z_i \ge t/20 ]. \]
Note that the conditioning on $T_{V(H)}> t/2, T_{R_i \cup R_{i-1} }>t/2$, does not affect the projection of the walk onto its $H_i$ co-ordinate, given the number of steps made on that co-ordinate. Also observe that conditioned on $T_{V(H)}> t/2, T_{R_i \cup R_{i-1} }>t/2$,
 the aforementioned projection (up to time $t/2$, viewed in times in which the $H_i$ co-ordinate changes) is itself a random walk on the expander $H_i$. This, in conjunction with the fact  that the $L_{\infty}$ mixing time of SRW on $H_i$ is at most $t/20$ (to see this, use \eqref{eq: L2contraction}, \eqref{L2Linfty} and the fact that  $|V(H_i)|=|R_i| \le     2^{2^{3n}-2^{2n+i}+n-i} $), implies that\[\sum_{u \in V(W_i)}\Pr_x[X_{t/2} \in (u,h)  \mid T_{V(H)}> t/2, T_{R_i \cup R_{i-1} }>t/2 , \bar Z_i \ge t/20 ] \le \bar C_4/|V(H_i)|. \]
  
Combining the last two inequalities, we get that for every $h \in V(H_i)$,  for all $1<i \le n$ and $x \in V(U_{i}) $ 
\begin{equation}
\label{eq: mixinHi}
\sum_{u \in V(W_i) } p_{u,h}\le C_4/|V(H_i)|. 
\end{equation}
Since $\sum_{i}^{k}x_i^{\ell}$ subject to the constraints $  x_i \in \R_+  $, for all $i$, and $\sum_{i=1}^k x_i=a >0$ is at most $a^{\ell}$, provided that $\ell \ge 1$, using \eqref{eq: mixinHi} we get that for all $1<i \le n$ and $x \in V(U_{i}) $ 
\begin{equation}
\label{eq: hard}
\begin{split}
& \| \Pr_x[X_{t/2} \in \cdot \mid T_{V(H)}> t/2, T_{R_i \cup R_{i-1} }>t/2 ]- \pi(\cdot)\|_{\ell,\pi}^{\ell} \\ & \le C_5 \sum_{h \in V(H_i)} 1/|V(G_n)| \sum_{u \in V(W_i)} p_{u,h}^{\ell}/(1/|V(G_n)| )^{\ell} \\ & \le C_6 |V(G_n)|^{\ell -1} \sum_{h \in V(H_i)}|V(H_i)|^{-\ell}=C_6 \left( \frac{|V(G_n)|}{|V(H_i)|} \right)^{\ell -1}\le C_7 2^{(\ell - 1)2^{2n+i+2}}, 
\end{split}
\end{equation}
as desired.
 \qed 

Using \eqref{eq: didntreachR2}, in order to prove \eqref{eq: notinH} it suffices to show that for all $i$ and all $x \in R_i$ we have (provided that  $\ell-1$ is sufficiently small and $C$ is sufficiently large) that

\begin{equation}
\label{eq: notinH2}
\Pr_x[T_{V(H)}> t/2 ]\|\Pr_x^{t/2}[\cdot \mid T_{V(H)}> t/2  ]- \pi(\cdot) \|_{\ell,\pi} \le 1/8.
\end{equation}

\medskip
The case $i=1$ can be treated separately in a similar manner to the analysis below. Hence below we assume that $i>1$ and $x \in R_i$.

We argue that (provided that $C$ is sufficiently large) for all  $i>1$ and $x \in R_i$,

\begin{equation}
\label{eq: smallY1}
c_{3}\Pr_x[T_{V(H)}> \frac{t}{2} ] \le  \Pr_x[T_{R_{i-1}}<T_{V(H)} ] \le  \frac{|\mathrm{BMP_i} |}{|\mathrm{GMP_i} \cup \mathrm{BMP_i}| } = \frac{|\mathrm{BMP_i} |}{2^{2^{2n+i-2}}} \le 2^{-c_2 2^{2n+i} }.  
\end{equation}
The proof of the first inequality in \eqref{eq: smallY1} is similar to the proof of \eqref{eq: didntreachR1} and hence omitted. The last inequality in \eqref{eq: smallY1} is trivial. Finally, using a coupling argument, the second inequality in \eqref{eq: smallY1} can be proven by noting that started from $o_i$, the hitting distribution of $\mathrm{GMP_i} \cup \mathrm{BMP_i} = \cL_{2^{2n+i-2} \times q_{i}}(\cT_i)=\cL_{2^{2n+i-2}}( \cT_{\mathrm{bs},i}) $ w.r.t.~simple random walk on $\cT_i $ is the uniform distribution on this set. We omit the details.  

\medskip

It is easy to verify that starting at $x \in R_i$,  the walk will make by time $t/2$ at least $2^{5n} $ consecutive steps on either $U_i$ or $U_{i+1}$ w.p.~at least $1-C_{8}|V(G_n)|^{-2} $. On this event, as before, the walk will make by time $t/2$ at least $2^{5n-6} $ steps on either $H_{i}$ or on both of $H_{i+1}$ and $\mathrm{BE}_{i+1} $, (in one of its visits to either $U_i$ or $U_{i+1}$, resp., in which it stays in it for at least $2^{5n}$ steps) w.p.~at least $1-C_{9}|V(G_n)|^{-2} $. Using the same reasoning as in \eqref{eq: mixinHi}-\eqref{eq: hard}, we get that for all  $i \ge 1$ and $x \in R_i$,
 \begin{equation}
\label{eq: smallY2}
 \|\Pr_x^{t/2}[\cdot \mid T_{V(H)}> t/2]  - \pi(\cdot) \|_{\ell,\pi}^{\ell} \le  C_7 2^{(\ell - 1)2^{2n+i+2}}.
\end{equation}
We leave the verification of \eqref{eq: smallY2} as an exercise. Using \eqref{eq: smallY1}-\eqref{eq: smallY2} we get that for all  $i > 1$ and $x \in R_i$ (provided that $\ell-1$ is sufficiently small and $C$ is sufficiently large),

\begin{equation}
\label{eq: largeY1}
\Pr_x[T_{V(H)}> t/2 ]\|\Pr_x^{t/2}[\cdot \mid T_{V(H)}> t/2]  - \pi(\cdot) \|_{\ell,\pi} \le 1/8.
\end{equation}
This concludes the proof of $\tau_2(G_n) \le \lceil C 2^{18 n} \rceil$. \qed \section{Proof of Proposition \ref{prop: robustness}}
(1) Torus $\Z_n^d$: (the argument is almost identical to that in \cite{cf:Ding}) The lower bound follows from the fact that the inverse of the spectral gap (which is robust and up to a $\log 2$ factor, bounds $\tau_1$ from below)  is at least $c_d n^{2}$, while the upper bound can be deduced from the Morris and Peres' Evolving sets bound \cite{cf:Evolving}, which is also robust. (2) The giant component of an Erd\H os-R\'enyi supercritical random graph $\mathcal{G}(n,c/n) $: the existence of paths of length $\Theta (\log n)$ implies that the inverse of the Log-Sobolev constant, is at least $c \log^3 n $ (this can be derived using Lemma 4.2 in \cite{cf:Spectral}). This provides a robust lower bound on $\tau_{\infty}$. The upper bound follows from the fact that in this case the spectral gap satisfies $\lambda:=\Theta(1/ \log^2 n)$ and so $\tau_{\infty} \le C \lambda^{-1}\log n$ (since $c$ is fixed, the average degree is uniformly bounded). Finally, for example (3) and (4) the argument is identical to that in \cite{cf:Ding} and hence omitted.
\section*{Acknowledgements}
The author is grateful to Gady Kozma and Yuval Peres for useful discussions. The author would like to thank them and the anonymous referee for making suggestions which improved the presentation of the note.

\end{document}